\newtheorem*{theoremA}{Theorem A}
\newtheorem*{theoremB}{Theorem B}
\newtheorem{theorem}{Theorem}
\newtheorem{lemma}[theorem]{Lemma}
\newtheorem{remark}[theorem]{Remark}
\newtheorem{definition}[theorem]{Definition}
\newenvironment{Proof}[1][Proof.]{\begin{trivlist}
\item[\hskip \labelsep {\bfseries #1}]}{\flushright
$\Box$\end{trivlist}}
\begin{document}
\noindent{\Large
The algebraic and geometric classification of \\
nilpotent left-symmetric  algebras}
\footnote{
The work was supported by
 Russian Science Foundation under grant 19-71-10016.
  The authors thank  the referee for   constructive comments.}

   \

   {\bf  Jobir Adashev$^{a}$,
   Ivan Kaygorodov$^{b,c,d}$,
   Abror Khudoyberdiyev$^{a,e}$ \&
   Aloberdi Sattarov$^{a}$}

\
{\tiny

$^{a}$ Institute of Mathematics Academy of
Sciences of Uzbekistan, Tashkent, Uzbekistan.

$^{b}$ CMCC, Universidade Federal do ABC, Santo Andr\'e, Brazil.

$^{c}$ Moscow Center for Fundamental and Applied Mathematics, Moscow, GSP-1, 119991, Russia.

$^{d}$ Saint Petersburg  University, Russia.

$^{e}$ National University of Uzbekistan, Tashkent, Uzbekistan.

\smallskip

   E-mail addresses:

\smallskip

Jobir Adashev (adashevjq@mail.ru)

Ivan   Kaygorodov (kaygorodov.ivan@gmail.com)

Abror Khudoyberdiyev (khabror@mail.ru)

Aloberdi Sattarov (saloberdi90@mail.ru)

}

\

\noindent{\bf Abstract}:
{\it This paper is devoted to the complete algebraic and geometric classification of complex $4$-dimensional nilpotent left-symmetric  algebras.
 The corresponding geometric variety has dimension $15$ and decomposes into $3$ irreducible components determined by the Zariski closures of two one-parameter families of algebras and a two-parameter family of algebras (see Theorem B). 
 In particular, there are no rigid $4$-dimensional complex nilpotent left  symmetric algebras.}

\

\noindent {\bf Keywords}:
{\it left-symmetric algebras,  Novikov algebras, assosymmetric algebras,
nilpotent algebras, algebraic classification, central extension, geometric classification, degeneration.}

\

\noindent {\bf MSC2010}: 17D25, 17A30, 14D06, 14L30.

\section*{Introduction}

The algebraic classification (up to isomorphism) of algebras of dimension $n$ from a certain variety
defined by a certain family of polinomial identities is a classic problem in the theory of non-associative algebras.
There are many results related to the algebraic classification of small-dimensional algebras in the varieties of
Jordan, Lie, Leibniz, Zinbiel and many other algebras \cite{ kkl20, ckls, kkp20, ck13, degr3, usefi1,  degr1, degr2,     ha16, hac18, kv16, krh14}.
Another interesting direction in the classification of algebras is the geometric classification.
There are many results related to the geometric classification of
Jordan, Lie, Leibniz, Zinbiel and many other algebras
\cite{wolf1, kv19, casas,bb14,    BC99,      GRH,   GRH2,    ikv17, ckls, kkp20, ikv18,      kv16, S90}.
An algebraic classification of complex $3$-dimensional left-symmetric algebras is given in \cite{bai}.
In the present paper, we give the algebraic  and geometric classification of
$4$-dimensional nilpotent left-symmetric  algebras.

Left-symmetric algebras (or under other names like Koszul–Vinberg algebras, quasi-associative algebras, pre-Lie algebras, and so on) are a class of nonassociative algebras coming from the study of several topics in geometry and algebra, such as rooted tree algebras, convex homogeneous cones \cite{vin63}, affine manifolds and affine structures on Lie groups \cite{koz61}, deformation of associative algebras \cite{ger63}, and so on. They are Lie-admissible algebras (in the sense that the commutators define Lie algebra structures) whose left multiplication operators form a Lie algebra.
The class of left-symmetric algebras contains  associative algebras,
Novikov algebras and  assosymmetric algebras as subvarieties.
The variety of left-symmetric algebras is defined by the following identity:
\[
\begin{array}{rcl}
(xy)z-x(yz) &=& (yx)z-y(xz).
\end{array} \]
The variety of Novikov algebras is a subclass of  left-symmetric algebras defined by the following identity:
\[
\begin{array}{rcl}
(xy)z &=& (xz)y.
\end{array} \]

Furthermore, left-symmetric algebras are a kind of natural algebraic systems appearing in many fields in mathematics and mathematical physics. Perhaps this is one of the most attractive and interesting places. As it was pointed out in a paper of Chapoton and Livernet \cite{chap01}, the left-symmetric algebra “deserves more attention than it has been given.” For example, left-symmetric algebras appear as an underlying structure of those Lie algebras that possess a phase space, thus “they form a natural category from the point of view of classical and quantum mechanics” \cite{kuper}; they are the underlying algebraic structures of vertex algebras \cite{bakac03}; there is a correspondence between left-symmetric algebras and complex product structures on Lie algebras \cite{andrada}, which plays an important role in the theory of hypercomplex and hypersymplectic manifolds; left-symmetric algebras have close relations with certain integrable systems \cite{borde}, classical and quantum Yang–Baxter equation \cite{etingof};  Poisson brackets and infinite-dimensional Lie algebras \cite{balnov, geldor}, operads \cite{chap01}, quantum field theory \cite{connes}, and so on (see \cite{burde} and the references therein).

Our method for classifying nilpotent left-symmetric algebras is based on the calculation of central extensions of nilpotent algebras of smaller dimensions from the same variety.
The algebraic study of central extensions of Lie and non-Lie algebras has been an important topic for years \cite{ hac16,ss78,zusmanovich}.
First, Skjelbred and Sund used central extensions of Lie algebras to obtain a classification of nilpotent Lie algebras  \cite{ss78}.
After that, using the method described by Skjelbred and Sund,  all non-Lie central extensions of  all $4$-dimensional Malcev algebras were described \cite{hac16}, and also
all non-associative central extensions of $3$-dimensional Jordan algebras.
Note that the Skjelbred-Sund method of central extensions is an important tool in the classification of nilpotent algebras,
which was used to describe
all  $4$-dimensional nilpotent associative algebras \cite{degr1},
all  $5$-dimensional nilpotent Jordan algebras \cite{ha16},
all  $5$-dimensional nilpotent restricted Lie algebras \cite{usefi1},
all $6$-dimensional nilpotent Lie algebras \cite{degr3,degr2},
all  $6$-dimensional nilpotent Malcev algebras \cite{hac18}
and some others.

\section{The algebraic classification of nilpotent left-symmetric algebras}
\subsection{Method of classification of nilpotent algebras}
Throughout this paper, we use the notations and methods well written in \cite{hac16},
which we have adapted for the left-symmetric case with some modifications.
Further in this section we give some important definitions.

Let $({\bf A}, \cdot)$ be a left-symmetric  algebra over  $\mathbb C$
and $\mathbb V$ a vector space over ${\mathbb C}$. The $\mathbb C$-linear space ${\rm Z^{2}}\left(
\bf A,\mathbb V \right) $ is defined as the set of all  bilinear maps $\theta  \colon {\bf A} \times {\bf A} \longrightarrow {\mathbb V}$ such that
\[ \theta(xy,z)-\theta(x,yz)= \theta(yx,z)-\theta(y,xz). \]

These elements will be called {\it cocycles}. For a
linear map $f$ from $\bf A$ to  $\mathbb V$, if we define $\delta f\colon {\bf A} \times
{\bf A} \longrightarrow {\mathbb V}$ by $\delta f  (x,y ) =f(xy )$, then $\delta f\in {\rm Z^{2}}\left( {\bf A},{\mathbb V} \right) $. We define ${\rm B^{2}}\left({\bf A},{\mathbb V}\right) =\left\{ \theta =\delta f\ : f\in {\rm Hom}\left( {\bf A},{\mathbb V}\right) \right\} $.
%One can easily check that ${\rm B^{2}}(\bf A,\mathbb V)$ is a linear subspace of ${\rm Z^{2}}\left( {\bf A},{\mathbb V}\right) $; %its elements are called {\it coboundaries}.
We define the {\it second cohomology space} ${\rm H^{2}}\left( {\bf A},{\mathbb V}\right) $ as the quotient space ${\rm Z^{2}}
\left( {\bf A},{\mathbb V}\right) \big/{\rm B^{2}}\left( {\bf A},{\mathbb V}\right) $.
% The equivalence class of $%
%\theta \in {\rm Z^{2}}\left( {\bf A},{\mathbb V}\right) $ will be denoted by $\left[
%\theta \right] \in {\rm H^{2}}\left( {\bf A},{\mathbb V}\right) $.

\

Let $\operatorname{Aut}({\bf A}) $ be the automorphism group of  ${\bf A} $ and let $\phi \in \operatorname{Aut}({\bf A})$. For $\theta \in
{\rm Z^{2}}\left( {\bf A},{\mathbb V}\right) $ define  the action of the group $\operatorname{Aut}({\bf A}) $ on ${\rm Z^{2}}\left( {\bf A},{\mathbb V}\right) $ by $\phi \theta (x,y)
=\theta \left( \phi \left( x\right) ,\phi \left( y\right) \right) $.  It is easy to verify that
 ${\rm B^{2}}\left( {\bf A},{\mathbb V}\right) $ is invariant under the action of $\operatorname{Aut}({\bf A}).$
 So, we have an induced action of  $\operatorname{Aut}({\bf A})$  on ${\rm H^{2}}\left( {\bf A},{\mathbb V}\right)$.

\

Let $\bf A$ be a left-symmetric  algebra of dimension $m$ over  $\mathbb C$ and ${\mathbb V}$ be a $\mathbb C$-vector
space of dimension $k$. Given a bilinear map $\theta$, define on the linear space ${\bf A}_{\theta } = {\bf A}\oplus {\mathbb V}$ the
bilinear product `` $\left[ -,-\right] _{{\bf A}_{\theta }}$'' by $\left[ x+x^{\prime },y+y^{\prime }\right] _{{\bf A}_{\theta }}=
 xy +\theta(x,y) $ for all $x,y\in {\bf A},x^{\prime },y^{\prime }\in {\mathbb V}$.
The algebra ${\bf A}_{\theta }$ is called a $k$-{\it dimensional central extension} of ${\bf A}$ by ${\mathbb V}$. One can easily check that ${\bf A_{\theta}}$ is a left-symmetric
algebra if and only if $\theta \in {\rm Z^2}({\bf A}, {\mathbb V})$.

Call the
set $\operatorname{Ann}(\theta)=\left\{ x\in {\bf A}:\theta \left( x, {\bf A} \right)+ \theta \left({\bf A} ,x\right) =0\right\} $
the {\it annihilator} of $\theta $. We recall that the {\it annihilator} of an  algebra ${\bf A}$ is defined as
the ideal $\operatorname{Ann}(  {\bf A} ) =\left\{ x\in {\bf A}:  x{\bf A}+ {\bf A}x =0\right\}$. Observe
 that
$\operatorname{Ann}\left( {\bf A}_{\theta }\right) =(\operatorname{Ann}(\theta) \cap\operatorname{Ann}({\bf A}))
 \oplus {\mathbb V}$.

\

The following result shows that every algebra with a non-zero annihilator is a central extension of a smaller-dimensional algebra.

\begin{lemma}
Let ${\bf A}$ be an $n$-dimensional left-symmetric algebra such that $\dim (\operatorname{Ann}({\bf A}))=m\neq0$. Then there exists, up to isomorphism, a unique $(n-m)$-dimensional left-symmetric  algebra ${\bf A}'$ and a bilinear map $\theta \in {\rm Z^2}({\bf A'}, {\mathbb V})$ with $\operatorname{Ann}({\bf A'})\cap\operatorname{Ann}(\theta)=0$, where $\mathbb V$ is a vector space of dimension m, such that ${\bf A} \cong {{\bf A}'}_{\theta}$ and
 ${\bf A}/\operatorname{Ann}({\bf A})\cong {\bf A}'$.
\end{lemma}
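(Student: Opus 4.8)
The plan is to realize $\mathbf{A}$ explicitly as a central extension of its own quotient by the annihilator. First I would set $\mathbb{V} = \operatorname{Ann}(\mathbf{A})$, which has dimension $m$, and choose a vector-space complement $\mathbf{A}'$ to $\mathbb{V}$ in $\mathbf{A}$, so that $\mathbf{A} = \mathbf{A}' \oplus \mathbb{V}$ as vector spaces and $\dim \mathbf{A}' = n-m$. Let $\pi \colon \mathbf{A} \to \mathbf{A}'$ be the projection along $\mathbb{V}$. Since $\mathbb{V}$ is an ideal, the quotient $\mathbf{A}/\mathbb{V}$ carries an induced left-symmetric structure, and $\pi$ restricts to an identification of $\mathbf{A}'$ with $\mathbf{A}/\mathbb{V} = \mathbf{A}/\operatorname{Ann}(\mathbf{A})$; I transport this structure to $\mathbf{A}'$ by setting $x \cdot y = \pi(xy)$ for $x, y \in \mathbf{A}'$, where $xy$ denotes the product in $\mathbf{A}$. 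This makes $\mathbf{A}'$ a left-symmetric algebra with $\mathbf{A}/\operatorname{Ann}(\mathbf{A}) \cong \mathbf{A}'$, supplying the last assertion of the lemma.

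Next I would define $\theta \colon \mathbf{A}' \times \mathbf{A}' \to \mathbb{V}$ as the complementary component of the product, namely $\theta(x,y) = xy - \pi(xy)$, so that in $\mathbf{A}$ one has the decomposition $xy = x \cdot y + \theta(x,y)$ for all $x, y \in \mathbf{A}'$. By construction this is precisely the product rule of the central extension $\mathbf{A}'_{\theta}$, so once $\theta$ is shown to be a cocycle the isomorphism $\mathbf{A} \cong \mathbf{A}'_{\theta}$ is immediate. To verify $\theta \in {\rm Z^2}(\mathbf{A}', \mathbb{V})$, I would project the left-symmetric identity $(xy)z - x(yz) = (yx)z - y(xz)$ of $\mathbf{A}$ onto $\mathbb{V}$: since any product that already lands in $\mathbb{V} = \operatorname{Ann}(\mathbf{A})$ is killed by a further multiplication, the $\mathbb{V}$-component of $(xy)z$ equals $\theta(x \cdot y, z)$ and that of $x(yz)$ equals $\theta(x, y \cdot z)$, and likewise for the right-hand side. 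The projected identity therefore collapses to $\theta(xy,z) - \theta(x,yz) = \theta(yx,z) - \theta(y,xz)$, which is exactly the defining relation of a cocycle.

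To establish $\operatorname{Ann}(\mathbf{A}') \cap \operatorname{Ann}(\theta) = 0$, I would invoke the identity $\operatorname{Ann}(\mathbf{A}'_{\theta}) = (\operatorname{Ann}(\theta) \cap \operatorname{Ann}(\mathbf{A}')) \oplus \mathbb{V}$ recorded above the statement. Because $\mathbf{A} \cong \mathbf{A}'_{\theta}$ and $\dim \operatorname{Ann}(\mathbf{A}) = m = \dim \mathbb{V}$, comparing dimensions in this direct-sum decomposition forces $\operatorname{Ann}(\theta) \cap \operatorname{Ann}(\mathbf{A}') = 0$. For uniqueness, suppose $\mathbf{A} \cong \mathbf{A}''_{\theta''}$ is any presentation satisfying the same conditions. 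The hypothesis $\operatorname{Ann}(\mathbf{A}'') \cap \operatorname{Ann}(\theta'') = 0$ and the same annihilator formula give $\operatorname{Ann}(\mathbf{A}''_{\theta''}) = \mathbb{V}$, so the quotient of $\mathbf{A}''_{\theta''}$ by its full annihilator is $\mathbf{A}''$; hence $\mathbf{A}'' \cong \mathbf{A}/\operatorname{Ann}(\mathbf{A}) \cong \mathbf{A}'$, and $\mathbf{A}'$ is determined up to isomorphism.

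The main technical point will be the cocycle verification, where one must carefully track how the four triple products in the left-symmetric identity for $\mathbf{A}$ split into their $\mathbf{A}'$- and $\mathbb{V}$-parts and confirm that the $\mathbb{V}$-parts reassemble into precisely the relation defining ${\rm Z^2}(\mathbf{A}', \mathbb{V})$; the crucial simplification is that $\mathbb{V} = \operatorname{Ann}(\mathbf{A})$ annihilates all further products. The remaining ingredients — the vector-space splitting, the dimension count pinning down the annihilator intersection, and the uniqueness argument — are routine bookkeeping once this cocycle identity is in place.
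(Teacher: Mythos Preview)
Your proposal is correct and follows essentially the same approach as the paper: choose a linear complement $\mathbf{A}'$ to $\operatorname{Ann}(\mathbf{A})$, project the product to define the multiplication on $\mathbf{A}'$ and the cocycle $\theta$, and read off the required properties. The only cosmetic difference is that the paper infers $\theta\in{\rm Z^2}(\mathbf{A}',\mathbb{V})$ from the previously recorded equivalence ``$\mathbf{A}'_{\theta}$ is left-symmetric $\Leftrightarrow$ $\theta$ is a cocycle'' (since $\mathbf{A}'_{\theta}\cong\mathbf{A}$), whereas you verify the cocycle identity directly by projecting the left-symmetric relation onto $\mathbb{V}$.
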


\begin{proof}
Let ${\bf A}'$ be a linear complement of $\operatorname{Ann}({\bf A})$ in ${\bf A}$. Define a linear map $P \colon {\bf A} \longrightarrow {\bf A}'$ by $P(x+v)=x$ for $x\in {\bf A}'$ and $v\in\operatorname{Ann}({\bf A})$, and define a multiplication on ${\bf A}'$ by $[x, y]_{{\bf A}'}=P(x y)$ for $x, y \in {\bf A}'$.
For $x, y \in {\bf A}$, we have
\[P(xy)=P((x-P(x)+P(x))(y- P(y)+P(y)))=P(P(x) P(y))=[P(x), P(y)]_{{\bf A}'}. \]

Since $P$ is a homomorphism $P({\bf A})={\bf A}'$ is a left-symmetric algebra and
 ${\bf A}/\operatorname{Ann}({\bf A})\cong {\bf A}'$, which gives us the uniqueness. Now, define the map $\theta \colon {\bf A}' \times {\bf A}' \longrightarrow\operatorname{Ann}({\bf A})$ by $\theta(x,y)=xy- [x,y]_{{\bf A}'}$.
  Thus, ${\bf A}'_{\theta}$ is ${\bf A}$ and therefore $\theta \in {\rm Z^2}({\bf A'}, {\mathbb V})$ and $\operatorname{Ann}({\bf A'})\cap\operatorname{Ann}(\theta)=0$.
\end{proof}

\begin{definition}
Let ${\bf A}$ be an algebra and $I$ be a subspace of $\operatorname{Ann}({\bf A})$. If ${\bf A}={\bf A}_0 \oplus I$
then $I$ is called an {\it annihilator component} of ${\bf A}$.
A central extension of an algebra $\bf A$ without annihilator component is called a {\it non-split central extension}.
\end{definition}

Our task is to find all central extensions of an algebra $\bf A$ by
a space ${\mathbb V}$.  In order to solve the isomorphism problem we need to study the
action of $\operatorname{Aut}({\bf A})$ on ${\rm H^{2}}\left( {\bf A},{\mathbb V}
\right) $. To do that, let us fix a basis $e_{1},\ldots ,e_{s}$ of ${\mathbb V}$, and $
\theta \in {\rm Z^{2}}\left( {\bf A},{\mathbb V}\right) $. Then $\theta $ can be uniquely
written as $\theta \left( x,y\right) =
\displaystyle \sum_{i=1}^{s} \theta _{i}\left( x,y\right) e_{i}$, where $\theta _{i}\in
{\rm Z^{2}}\left( {\bf A},\mathbb C\right) $. Moreover, $\operatorname{Ann}(\theta)=\operatorname{Ann}(\theta _{1})\cap\operatorname{Ann}(\theta _{2})\cap\ldots \cap\operatorname{Ann}(\theta _{s})$. Furthermore, $\theta \in
{\rm B^{2}}\left( {\bf A},{\mathbb V}\right) $\ if and only if all $\theta _{i}\in {\rm B^{2}}\left( {\bf A},
\mathbb C\right) $.
It is not difficult to prove (see \cite[Lemma 13]{hac16}) that given a left-symmetric algebra ${\bf A}_{\theta}$, if we write as
above $\theta \left( x,y\right) = \displaystyle \sum_{i=1}^{s} \theta_{i}\left( x,y\right) e_{i}\in {\rm Z^{2}}\left( {\bf A},{\mathbb V}\right) $ and
$\operatorname{Ann}(\theta)\cap \operatorname{Ann}\left( {\bf A}\right) =0$, then ${\bf A}_{\theta }$ has   a nonzero
annihilator component if and only if $\left[ \theta _{1}\right] ,\left[
\theta _{2}\right] ,\ldots ,\left[ \theta _{s}\right] $ are linearly
dependent in ${\rm H^{2}}\left( {\bf A},\mathbb C\right) $.

\;

Let ${\mathbb V}$ be a finite-dimensional vector space over $\mathbb C$. The {\it Grassmannian} $G_{k}\left( {\mathbb V}\right) $ is the set of all $k$-dimensional
linear subspaces of $ {\mathbb V}$. Let $G_{s}\left( {\rm H^{2}}\left( {\bf A},\mathbb C\right) \right) $ be the Grassmannian of subspaces of dimension $s$ in
${\rm H^{2}}\left( {\bf A},\mathbb C\right) $. There is a natural action of $\operatorname{Aut}({\bf A})$ on $G_{s}\left( {\rm H^{2}}\left( {\bf A},\mathbb C\right) \right) $.
Let $\phi \in \operatorname{Aut}({\bf A})$. For $W=\left\langle
\left[ \theta _{1}\right] ,\left[ \theta _{2}\right] ,\dots,\left[ \theta _{s}
\right] \right\rangle \in G_{s}\left( {\rm H^{2}}\left( {\bf A},\mathbb C
\right) \right) $ define $\phi W=\left\langle \left[ \phi \theta _{1}\right]
,\left[ \phi \theta _{2}\right] ,\dots,\left[ \phi \theta _{s}\right]
\right\rangle $. We denote the orbit of $W\in G_{s}\left(
{\rm H^{2}}\left( {\bf A},\mathbb C\right) \right) $ under the action of $\operatorname{Aut}({\bf A})$ by $\operatorname{Orb}(W)$. Given
\[
W_{1}=\left\langle \left[ \theta _{1}\right] ,\left[ \theta _{2}\right] ,\dots,
\left[ \theta _{s}\right] \right\rangle ,W_{2}=\left\langle \left[ \vartheta
_{1}\right] ,\left[ \vartheta _{2}\right] ,\dots,\left[ \vartheta _{s}\right]
\right\rangle \in G_{s}\left( {\rm H^{2}}\left( {\bf A},\mathbb C\right)
\right),
\]
we easily have that if $W_{1}=W_{2}$, then $ \bigcap\limits_{i=1}^{s}\operatorname{Ann}(\theta _{i})\cap \operatorname{Ann}\left( {\bf A}\right) = \bigcap\limits_{i=1}^{s}
\operatorname{Ann}(\vartheta _{i})\cap\operatorname{Ann}( {\bf A}) $, and therefore we can introduce
the set
\[
{\bf T}_{s}({\bf A}) =\left\{ W=\left\langle \left[ \theta _{1}\right] ,
\left[ \theta _{2}\right] ,\dots,\left[ \theta _{s}\right] \right\rangle \in
G_{s}\left( {\rm H^{2}}\left( {\bf A},\mathbb C\right) \right) : \bigcap\limits_{i=1}^{s}\operatorname{Ann}(\theta _{i})\cap\operatorname{Ann}({\bf A}) =0\right\},
\]
which is stable under the action of $\operatorname{Aut}({\bf A})$.

\

Now, let ${\mathbb V}$ be an $s$-dimensional linear space and let us denote by
${\bf E}\left( {\bf A},{\mathbb V}\right) $ the set of all {\it non-split $s$-dimensional central extensions} of ${\bf A}$ by
${\mathbb V}$. By the above, we can write
\[
{\bf E}\left( {\bf A},{\mathbb V}\right) =\left\{ {\bf A}_{\theta }:\theta \left( x,y\right) = \sum_{i=1}^{s}\theta _{i}\left( x,y\right) e_{i} \ \ \text{and} \ \ \left\langle \left[ \theta _{1}\right] ,\left[ \theta _{2}\right] ,\dots,
\left[ \theta _{s}\right] \right\rangle \in {\bf T}_{s}({\bf A}) \right\} .
\]
We also have the following result, which can be proved as in \cite[Lemma 17]{hac16}.

\begin{lemma}
 Let ${\bf A}_{\theta },{\bf A}_{\vartheta }\in {\bf E}\left( {\bf A},{\mathbb V}\right) $. Suppose that $\theta \left( x,y\right) =  \displaystyle \sum_{i=1}^{s}
\theta _{i}\left( x,y\right) e_{i}$ and $\vartheta \left( x,y\right) =
\displaystyle \sum_{i=1}^{s} \vartheta _{i}\left( x,y\right) e_{i}$.
Then the left-symmetric algebras ${\bf A}_{\theta }$ and ${\bf A}_{\vartheta } $ are isomorphic
if and only if
$$\operatorname{Orb}\left\langle \left[ \theta _{1}\right] ,
\left[ \theta _{2}\right] ,\dots,\left[ \theta _{s}\right] \right\rangle =
\operatorname{Orb}\left\langle \left[ \vartheta _{1}\right] ,\left[ \vartheta
_{2}\right] ,\dots,\left[ \vartheta _{s}\right] \right\rangle .$$
\end{lemma}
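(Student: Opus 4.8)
The plan is to follow the standard Skjelbred--Sund argument, which reduces the isomorphism problem for central extensions to an orbit problem in the Grassmannian. The structural fact I would establish first is that, since both ${\bf A}_{\theta}$ and ${\bf A}_{\vartheta}$ lie in ${\bf E}({\bf A},{\mathbb V})$, we have $\operatorname{Ann}(\theta)\cap\operatorname{Ann}({\bf A})=\bigcap_{i}\operatorname{Ann}(\theta_{i})\cap\operatorname{Ann}({\bf A})=0$, and hence, by the identity $\operatorname{Ann}({\bf A}_{\theta})=(\operatorname{Ann}(\theta)\cap\operatorname{Ann}({\bf A}))\oplus{\mathbb V}$ recorded above, the annihilator of each extension is exactly ${\mathbb V}$. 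Because any algebra isomorphism carries the annihilator onto the annihilator, an isomorphism $\Phi\colon{\bf A}_{\vartheta}\to{\bf A}_{\theta}$ must satisfy $\Phi({\mathbb V})={\mathbb V}$. This is the observation that makes the bookkeeping possible, and it is where the non-split hypothesis is genuinely used.

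For the forward direction, I would write $\Phi$ in block form relative to the fixed decomposition ${\bf A}\oplus{\mathbb V}$. Restricting to ${\mathbb V}$ gives an invertible $\psi\in GL({\mathbb V})$, and projecting $\Phi$ onto the ${\bf A}$-factor gives a map $\phi\colon{\bf A}\to{\bf A}$ together with a linear map $\tau\colon{\bf A}\to{\mathbb V}$, so that $\Phi(a)=\phi(a)+\tau(a)$ for $a\in{\bf A}$. Passing to the quotient by ${\mathbb V}$ shows $\phi\in\operatorname{Aut}({\bf A})$. Expanding the homomorphism condition $\Phi([a,b]_{{\bf A}_{\vartheta}})=[\Phi(a),\Phi(b)]_{{\bf A}_{\theta}}$ and cancelling the $\phi(a)\phi(b)$ terms, one is left with
\[
\psi\bigl(\vartheta(a,b)\bigr)=\theta\bigl(\phi(a),\phi(b)\bigr)-\tau(ab)=(\phi\theta)(a,b)-\delta\tau(a,b),
\]
that is, $\psi\vartheta=\phi\theta-\delta\tau$ with $\delta\tau\in{\rm B^{2}}({\bf A},{\mathbb V})$. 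Reading this componentwise in a basis of ${\mathbb V}$ and passing to ${\rm H^{2}}({\bf A},\mathbb C)$, the classes of the components of $\psi\vartheta$ span $W_{\vartheta}=\langle[\vartheta_{1}],\dots,[\vartheta_{s}]\rangle$ (because $\psi$ is invertible), while the classes of the components of $\phi\theta$ span $\phi W_{\theta}$; as $\delta\tau$ is a coboundary these spans coincide, giving $W_{\vartheta}=\phi W_{\theta}$ and therefore $\operatorname{Orb}(W_{\theta})=\operatorname{Orb}(W_{\vartheta})$.

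For the converse, I would run the computation backwards: given $\phi\in\operatorname{Aut}({\bf A})$ with $\phi W_{\theta}=W_{\vartheta}$, the equality of subspaces means each $[\phi\theta_{i}]$ is a combination of the $[\vartheta_{j}]$ modulo coboundaries, which packages into an invertible $\psi\in GL({\mathbb V})$ and a linear map $\tau\colon{\bf A}\to{\mathbb V}$ satisfying $\phi\theta-\psi\vartheta=\delta\tau$. Defining $\Phi$ on ${\bf A}$ by $\Phi(a)=\phi(a)+\tau(a)$ and on ${\mathbb V}$ by $\psi$, the same expansion shows $\Phi$ is an algebra homomorphism, and it is bijective because $\phi$ and $\psi$ are, so it is the desired isomorphism.

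I expect the only genuinely delicate point to be the translation step in each direction: converting the single cocycle identity $\psi\vartheta=\phi\theta-\delta\tau$ into the Grassmannian statement $W_{\vartheta}=\phi W_{\theta}$, and conversely reconstructing a well-defined invertible $\psi$ (together with $\tau$) from an equality of $s$-dimensional subspaces. Everything else is a routine verification mirroring the argument for Malcev algebras in \cite[Lemma 17]{hac16}; the left-symmetric identity itself enters only through the fact, established earlier, that ${\rm B^{2}}({\bf A},{\mathbb V})\subseteq{\rm Z^{2}}({\bf A},{\mathbb V})$, so that the cohomology classes and the induced $\operatorname{Aut}({\bf A})$-action are well defined.
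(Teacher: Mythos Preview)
Your argument is correct and is precisely the standard Skjelbred--Sund argument the paper invokes by citing \cite[Lemma~17]{hac16} in lieu of a proof. The key steps you outline---annihilator preservation forcing the block form of $\Phi$, the resulting cocycle identity $\psi\vartheta=\phi\theta-\delta\tau$, and its translation to the Grassmannian equality $\phi W_{\theta}=W_{\vartheta}$---are exactly those of that reference.
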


This shows that there exists a one-to-one correspondence between the set of $\operatorname{Aut}({\bf A})$-orbits on ${\bf T}_{s}\left( {\bf A}\right) $ and the set of
isomorphism classes in ${\bf E}\left( {\bf A},{\mathbb V}\right) $. Consequently we have a
procedure that allows us, given a left-symmetric algebra ${\bf A}'$ of
dimension $n-s$, to construct all non-split central extensions of ${\bf A}'$. This procedure is:

\begin{enumerate}
\item For a given left-symmetric algebra ${\bf A}'$ of dimension $n-s $, determine ${\rm H^{2}}( {\bf A}',\mathbb {C}) $, $\operatorname{Ann}({\bf A}')$ and $\operatorname{Aut}({\bf A}')$.

\item Determine the set of $\operatorname{Aut}({\bf A}')$-orbits on ${\bf T}_{s}({\bf A}') $.

\item For each orbit, construct the left-symmetric algebra associated with a
representative of it.
\end{enumerate}

The above described method gives all (Novikov and non-Novikov) left-symmetric algebras. But we are interested in developing this method in such a way that it only gives non-Novikov left-symmetric  algebras, because the classification of all Novikov algebras is given in \cite{kkk18}. Clearly, any central extension of a non-Novikov left-symmetric algebra is non-Novikov. But a Novikov algebra may have extensions which are not Novikov algebras. More precisely, let ${\bf  N}$ be a Novikov algebra and $\theta \in {\rm Z_L^2}({\bf  N}, {\mathbb C}).$ Then ${\bf  N}_{\theta }$ is a Novikov algebra if and only if
\begin{equation*}
 \theta(xy,z)=\theta(xz,y),
 \end{equation*}
for all $x,y,z\in {\bf  N}.$ Define the subspace ${\rm Z_N^2}({\bf  N},{\mathbb C})$ of ${\rm Z_L^2}({\bf  N},{\mathbb C})$ by
\begin{equation*}
{\rm Z_N^2}({\bf  N},{\mathbb C}) =\left\{\begin{array}{c} \theta \in {\rm Z_L^2}({\bf  N},{\mathbb C}) : \theta(xy,z)=\theta(xz,y) \text{ for all } x, y,z\in {\bf  N}\end{array}\right\}.
\end{equation*}
Observe that ${\rm B^2}({\bf  N},{\mathbb C})\subseteq{\rm Z_N^2}({\bf  N},{\mathbb C}).$
Let ${\rm H_N^2}({\bf  N},{\mathbb C}) =%
{\rm Z_N^2}({\bf  N},{\mathbb C}) \big/{\rm B^2}({\bf  N},{\mathbb C}).$ Then ${\rm H_N^2}({\bf  N},{\mathbb C})$ is a subspace of $%
{\rm H_L^2}({\bf  N},{\mathbb C}).$ Define
\begin{eqnarray*}
{\bf R}_{s}({\bf  N})  &=&\left\{ {\bf W}\in {\bf T}_{s}({\bf  N}) :{\bf W}\in G_{s}({\rm H_N^2}({\bf  N},{\mathbb C}) ) \right\}, \\
{\bf U}_{s}({\bf  N})  &=&\left\{ {\bf W}\in {\bf T}_{s}({\bf  N}) :{\bf W}\notin G_{s}({\rm H_N^2}({\bf  N},{\mathbb C}) ) \right\}.
\end{eqnarray*}
Then ${\bf T}_{s}({\bf  N}) ={\bf R}_{s}(
{\bf  N})$ $\mathbin{\mathaccent\cdot\cup}$ ${\bf U}_{s}(
{\bf  N}).$ The sets ${\bf R}_{s}({\bf  N}) $
and ${\bf U}_{s}({\bf  N})$ are stable under the action
of $\operatorname{Aut}({\bf  N}).$ Thus, the left-symmetric algebras
corresponding to the representatives of $\operatorname{Aut}({\bf  N}) $%
-orbits on ${\bf R}_{s}({\bf  N})$ are Novikov  algebras,
while those corresponding to the representatives of $\operatorname{Aut}({\bf  N}%
) $-orbits on ${\bf U}_{s}({\bf  N})$ are not
Novikov algebras. Hence, we may construct all non-split non-Novikov left-symmetric algebras $%
\bf{A}$ of dimension $n$ with $s$-dimensional annihilator
from a given left-symmetric algebra $\bf{A}%
^{\prime }$ of dimension $n-s$ in the following way:

\begin{enumerate}
\item If $\bf{A}^{\prime }$ is non-Novikov, then apply the Procedure.

\item Otherwise, do the following:

\begin{enumerate}
\item Determine ${\bf U}_{s}(\bf{A}^{\prime })$ and $%
\operatorname{Aut}(\bf{A}^{\prime }).$

\item Determine the set of $\operatorname{Aut}(\bf{A}^{\prime })$-orbits on ${\bf U%
}_{s}(\bf{A}^{\prime }).$

\item For each orbit, construct the left-symmetric  algebra corresponding to one of its
representatives.
\end{enumerate}
\end{enumerate}

\subsection{Notations}
Let us introduce the following notations. Let ${\bf A}$ be a nilpotent algebra with
a basis $e_{1},e_{2}, \ldots, e_{n}.$ Then by $\Delta_{ij}$\ we will denote the
bilinear form
$\Delta_{ij}:{\bf A}\times {\bf A}\longrightarrow \mathbb C$
with $\Delta_{ij}(e_{l},e_{m}) = \delta_{il}\delta_{jm}.$
The set $\left\{ \Delta_{ij}:1\leq i, j\leq n\right\}$ is a basis for the linear space of
bilinear forms on ${\bf A},$ so every $\theta \in
{\rm Z^2}({\bf A},\bf \mathbb V )$ can be uniquely written as $
\theta = \displaystyle \sum_{1\leq i,j\leq n} c_{ij}\Delta _{{i}{j}}$, where $
c_{ij}\in \mathbb C$.
Let us fix the following notations:

$$\begin{array}{lll}
\bf{L}^{i}_{j}& \mbox{---}& j\mbox{th }i\mbox{-dimensional left-symmetric (non-Novikov)  algebra.} \\
\bf{L}^{i*}_{j}& \mbox{---}& j\mbox{th }i\mbox{-dimensional left-symmetric (Novikov) algebra.} \\

\end{array}$$

\subsection{The algebraic classification of  $3$-dimensional nilpotent left-symmetric algebras}
There are no nontrivial $1$-dimensional nilpotent left-symmetric algebras.
There is only one nontrivial $2$-dimensional nilpotent left-symmetric algebra
(it is the non-split central extension of the $1$-dimensional algebra with zero product):

$$\begin{array}{ll llll}
{\bf L}^{2*}_{01} &:& e_1 e_1 = e_2.\\
\end{array}$$

The classification of all non-split
3-dimensional nilpotent left-symmetric algebras is known:

\begin{longtable}{ll llllllllllll}
${\bf L}^{3*}_{02}$  &$:$& $e_1 e_1=e_3$  & $e_2 e_2=e_3$  \\
${\bf L}^{3*}_{03}$  &$:$& $e_1 e_2=e_3$  & $e_2 e_1=-e_3$   \\
${\bf L}^{3*}_{04}(\lambda)$  &$:$& $e_1 e_1 = \lambda e_3$   & $e_2 e_1=e_3$  & $e_2 e_2=e_3$  \\
${\bf L}^{3*}_{05}$  &$:$&  $e_1 e_1 = e_2$  & $e_2 e_1=e_3$  \\
${\bf L}^{3*}_{06}(\lambda)$ &$:$& $e_1 e_1 = e_2$ & $e_1 e_2=e_3$ & $e_2 e_1=\lambda e_3.$
\end{longtable}

\subsection{Central extensions of 3-dimensional nilpotent left-symmetric  algebras}

\subsubsection{The description of the second cohomology spaces of $3$-dimensional nilpotent left-symmetric algebras}

\
In the following table we give the description of the second cohomology space of  $3$-dimensional nilpotent left-symmetric algebras.

\begin{longtable}{|l |l |l |  }
                \hline
${\bf A}$ & ${\rm H}^2_{\bf N}({\bf A})$ & ${\rm H}^2_{\bf L}({\bf A})$  \\ \hline

${\bf L}_{01}^{3*}$&
$\Big\langle [\Delta_{12}],[\Delta_{13}], [\Delta_{21}], [\Delta_{31}], [\Delta_{33}] \Big\rangle$ &
${\rm H}^2_{\bf N}({\bf L}_{01}^{3*}) \oplus \Big\langle [\Delta_{23}] \Big\rangle$     \\ \hline

${\bf L}_{02}^{3*}$ &
$\Big\langle [\Delta_{12}],[\Delta_{21}], [\Delta_{22}] \Big\rangle$ &
${\rm H}^2_{\bf N}({\bf L}_{02}^{3*}) \oplus \Big\langle [\Delta_{31}],[\Delta_{32}] \Big\rangle$     \\ \hline

${\bf L}_{03}^{3*}$ &
$\Big\langle  [\Delta_{11}],[\Delta_{21}], [\Delta_{22}]\Big\rangle$ &
${\rm H}^2_{\bf N}({\bf L}_{03}^{3*}) \oplus$ \\ 
&&$\Big\langle [\Delta_{31} -2 \Delta_{13}],[\Delta_{32} -2 \Delta_{23}] \Big\rangle$     \\ \hline

${\bf L}_{04}^{3*}(\lambda)_{\lambda\neq 0}$  &
$\Big\langle [\Delta_{11}], [\Delta_{12}], [\Delta_{21}] \Big\rangle$ &
${\rm H}^2_{\bf N}({\bf L}_{04}^{3*}(\lambda)) \oplus$
\\ & & $\Big\langle [\Delta_{13}-\Delta_{31}-\Delta_{32}], [\Delta_{23}+\lambda\Delta_{31}] \Big\rangle$     \\ \hline

${\bf L}_{04}^{3*}(0)$ &\relax
$\Big\langle

[\Delta_{11}], [\Delta_{12}], [\Delta_{21}],  [\Delta_{13}-\Delta_{31}-\Delta_{32}],  [\Delta_{23}]   

\Big\rangle$ &
${\rm H}^2_{\bf N}({\bf L}_{04}^{3*}(0))$  \\ \hline

${\bf L}_{05}^{3*}$&
$\Big\langle  [\Delta_{12}], [\Delta_{13}- \Delta_{31}] \Big\rangle$ &
${\rm H}^2_{\bf N}({\bf L}_{05}^{3*}) \oplus \Big\langle   [\Delta_{22}+\Delta_{31}], [\Delta_{23}] \Big\rangle$ \\ \hline

${\bf L}_{06}^{3*}(\lambda)$ &
$\Big\langle [\Delta_{21}], [(2-\lambda)\Delta_{13}+\lambda(\Delta_{22}+\Delta_{31})]  \Big\rangle$ &
${\rm H}^2_{\bf N}({\bf L}_{06}^{3*}(\lambda)) \oplus \Big\langle [\Delta_{22}+ \Delta_{13}-\Delta_{31}] \Big\rangle$ \\ \hline
\end{longtable}

\begin{remark} Since
${\rm H}^2_{\bf L}({\bf L}_{04}^{3*}(0)) = {\rm H}^2_{\bf N}({\bf L}_{04}^{3*}(0)),$ then central
 extensions of the algebra ${\bf L}_{04}^{3*}(0)$ give us only Novikov algebras.

\end{remark}

\subsubsection{Central extensions of ${\bf L}^{3*}_{01}$}\label{ext-L^{3*}_{01}}
	Let us use the following notations:
	$$
	\nabla_1 = [\Delta_{12}], \quad \nabla_2 = [\Delta_{13}], \quad \nabla_3 = [\Delta_{21}],\quad \nabla_4 = [\Delta_{31}], \quad \nabla_5 = [\Delta_{33}], \quad \nabla_6 = [\Delta_{23}].
	$$
Take $\theta=\sum\limits_{i=1}^6\alpha_i\nabla_i\in {\rm H}_{\bf L}^2({\bf L}^{3*}_{01}).$
	The automorphism group of ${\bf L}^{3*}_{01}$ consists of invertible matrices of the form
	$$
	\phi=
	\begin{pmatrix}
	x &    0  &  0\\
	y &  x^2  &  u\\
	z &  0  &  t
	\end{pmatrix}.
	$$
	
	Since
	$$
	\phi^T\begin{pmatrix}
	0 &  \alpha_1 & \alpha_2\\
	\alpha_3  & 0 & \alpha_6\\
	\alpha_4&  0    & \alpha_5
	\end{pmatrix} \phi=
	\begin{pmatrix}
		\alpha^*   &  \alpha_1^* & \alpha_2^*\\
	\alpha_3^*  & 0 & \alpha_6^*\\
	\alpha_4^*&  0    & \alpha_5^*
	\end{pmatrix},
	$$
 we have that the action of ${\rm Aut} ({\bf L}^{3*}_{01})$  on the subspace
$\langle \sum\limits_{i=1}^6\alpha_i\nabla_i  \rangle$
is given by
$\langle \sum\limits_{i=1}^6\alpha_i^{*}\nabla_i\rangle,$
where
\[\begin{array}{rclcrcl}
\alpha^*_1&=&\alpha_1 x^3, & &
\alpha^*_2&=&\alpha _1 xu+\alpha _2 xt+\alpha _5 zt+\alpha_6 yt,\\
\alpha^*_3&=&\alpha _3 x^3+\alpha _6 x^2 z,& &
\alpha_4^*&=&\alpha _3 xu+\alpha _4 xt+\alpha _5 z t+\alpha _6 zu,\\
\alpha_5^*&=&\alpha _5 t^2+\alpha _6 tu,& &
\alpha_6^*&=&\alpha _6 x^2 t .
\end{array}\]
	
Since ${\rm H}^2_{\bf L}({\bf L}_{01}^{3*}) = {\rm H}^2_{\bf N}({\bf L}_{01}^{3*})\oplus \langle\nabla_6\rangle$ and we are interested only in new algebras, we have  $\alpha_6\neq0.$
	Then putting $z=-\frac{\alpha_3x}{\alpha_6},$ $u=-\frac{\alpha_5t}{\alpha_6}$ and  $y=\frac{(\alpha_3\alpha_5+\alpha_1\alpha_5-\alpha_2\alpha_6)x}{\alpha_6^2},$   we have
	
\begin{center}
$\alpha^*_2=\alpha^*_3=\alpha_5^*=0,$ 
$\alpha^*_1=\alpha _1 x^3,$  $\alpha_4^*=\frac{(\alpha_4\alpha_6-\alpha_3\alpha _5)xt}{\alpha _6},$ and $\alpha_6^*=\alpha _6 x^2t$ .\end{center}
	
	Consider the following cases.
\begin{enumerate}
\item  $\alpha_1\neq 0,$ $\alpha_4\alpha _6-\alpha_3\alpha _5 \neq 0,$ then by choosing $x=\frac{\alpha_4\alpha _6-\alpha_3\alpha _5}{\alpha_6^2},$ $t=\frac{\alpha_1(\alpha_4\alpha _6-\alpha_3\alpha _5)}{\alpha_6^3},$ we have the representative $\langle \nabla_1+\nabla_4+\nabla_6\rangle$.

\item $\alpha_1=0,$ $\alpha_4\alpha _6-\alpha_3\alpha _5 \neq 0,$
then by choosing $x=\frac{\alpha_4\alpha _6-\alpha_3\alpha _5}{\alpha_6^2},$ we have the representative
    $\langle \nabla_4+\nabla_6\rangle$.
   
\item  $\alpha_1\neq 0,$ $\alpha_4\alpha _6-\alpha_3\alpha _5 = 0,$ then by choosing $t=\frac{\alpha_1x}{\alpha_6},$ we have the representative $\langle \nabla_1+\nabla_6\rangle$.

\item $\alpha_1= 0,$ $\alpha_4\alpha _6-\alpha_3\alpha _5 = 0,$ then we have the representative   $\langle \nabla_6\rangle$.
\end{enumerate}

Hence, we have the following distints orbits
\begin{center}
$\langle \nabla_1+\nabla_4+\nabla_6\rangle,$
$\langle \nabla_4+\nabla_6\rangle,$ 
$\langle \nabla_1+\nabla_6\rangle$ and 
$\langle \nabla_6\rangle,$
\end{center}
which give the following new algebras:
\begin{longtable}{llllllll}
${\bf L}^4_{01}$ &: &
$e_1 e_1 = e_2$&  $e_1 e_2=e_4$& $e_2 e_3=e_4$& $e_3e_1=e_4$ \\
\hline${\bf L}^4_{02}$ &: &
$e_1 e_1 = e_2$&  $e_2 e_3=e_4$& $e_3e_1=e_4$ \\
\hline${\bf L}^4_{03}$ &: &
$e_1 e_1 = e_2$&  $e_1 e_2=e_4$& $e_2 e_3=e_4$ \\
\hline${\bf L}^4_{04}$ &: &
$e_1 e_1 = e_2$& $e_2 e_3=e_4$
\end{longtable}

\subsubsection{Central extensions of ${\bf L}^{3*}_{02}$}\label{ext-N^{3*}_{02}}
	Let us use the following notations:
	$$
	\nabla_1 = [\Delta_{12}], \quad \nabla_2 = [\Delta_{21}], \quad \nabla_3 =[\Delta_{22}], \quad \nabla_4 =[\Delta_{31}],\quad \nabla_5 = [\Delta_{32}].
	$$

	Take $\theta=\sum\limits_{i=1}^5\alpha_i\nabla_i\in {\rm H}_{\bf L}^2({\bf L}^{3*}_{02}).$
	The automorphism group of ${\bf L}^{3*}_{02}$ consists of invertible matrices of the form
	$$
	 \phi_1=
	\begin{pmatrix}
	x &    -y  &  0\\
	y &   x  & 0 \\
	z &  t  &  x^2+y^2
	\end{pmatrix} \quad \text{or} \quad \phi_2=
	\begin{pmatrix}
	x &    y  &  0\\
	y &  -x  & 0 \\
	z &  t  &  x^2+y^2
	\end{pmatrix}.
	$$

	Since
		$$
	\phi_1^T\begin{pmatrix}
	0      &  \alpha_1    & 0\\
	\alpha_2  &\alpha_3 & 0 \\
	\alpha_4&  \alpha_5    & 0
	\end{pmatrix} \phi_1= \begin{pmatrix}
  \alpha^* &  \alpha^*_1    & 0\\
	\alpha^*_2    & \alpha^*+ \alpha^*_3      & 0\\
	\alpha^*_4  &  \alpha^*_5       & 0
	\end{pmatrix},
	$$
we have that the action of ${\rm Aut} ({\bf L}^{3*}_{02})^{+}$ (it is the subgroup in ${\rm Aut} ({\bf  L}^{3*}_{02})$  formed  by all automorphisms of the first type) on the subspace
$\langle \sum\limits_{i=1}^5\alpha_i\nabla_i  \rangle$
is given by
$\langle \sum\limits_{i=1}^5\alpha_i^{*}\nabla_i\rangle,$
where
\[\begin{array}{rcl}\alpha^*_1&=&\alpha _1 x^2-\alpha _2 y^2+\alpha _3 xy-\alpha _4 yz+\alpha _5 xz,\\
\alpha^*_2&=&- \alpha_1y^2+ \alpha _2x^2+ \alpha_3xy+\alpha_4 xt+\alpha_5yt,\\
\alpha_3^*&=&-2\alpha_1 xy-2\alpha_2xy+\alpha_3(x^2-y^2)-\alpha_4(xz+yt)-\alpha_5(yz- xt),\\
\alpha_4^*&=&\left (\alpha_4 x +\alpha_5y \right)\left(x^2+y^2\right),\\
\alpha_5^*&=& \left(-\alpha_4y +\alpha_5x\right)\left(x^2+y^2\right).
\end{array}\]

Since ${\rm H^2_{\bf L}({\bf L}_{02}^{3*})} = {\rm H^2_{\bf N}({\bf L}_{02}^{3*})}\oplus \langle\nabla_4, \nabla_5\rangle$ and we are interested only in new algebras, we have  $(\alpha_4, \alpha_5)\neq(0,0).$
Moreover, without loss of
generality, one can assume $\alpha_4\neq0$.
Then we have the following cases.
\begin{enumerate}
\item $\alpha _4^2+\alpha _5^2 \neq 0,$ then by choosing $y=\frac{x\alpha_5}{\alpha_4}, \  t=\frac{(\alpha_1\alpha_5^2-\alpha_2\alpha_4^2-\alpha_3\alpha_4\alpha_5)x}{\alpha_4(\alpha_4^2+\alpha_5^2)}, \  z=\frac{(\alpha_3(\alpha_4^2-\alpha_5^2)-2\alpha_4\alpha_5(\alpha_1+\alpha_2))x}{\alpha_4(\alpha_4^2+\alpha_5^2)}$, we have
	
\begin{center}
$\alpha^{*}_2=\alpha^{*}_3=\alpha^{*}_5=0,$  $\alpha^{*}_1=\frac{(\alpha_1\alpha_4^2 -\alpha_2\alpha_5^2+\alpha_3\alpha_4\alpha_5)x^2}{\alpha_4^2}$ and  $\alpha_4^{*}=\frac{x^3(\alpha _4^2+\alpha _5^2)^2}{\alpha_4^3}.$
\end{center}

\begin{enumerate}
    \item if $\alpha_1\alpha_4^2 -\alpha_2\alpha_5^2+\alpha_3\alpha_4\alpha_5=0,$ then we have the representative $\langle \nabla_4\rangle$;

\item if $\alpha_1\alpha_4^2 -\alpha_2\alpha_5^2+\alpha_3\alpha_4\alpha_5\neq 0,$ then by choosing $x=\frac{\alpha_4(\alpha_1\alpha_4^2 -\alpha_2\alpha_5^2+\alpha_3\alpha_4\alpha_5)}{(\alpha_4^2+\alpha_5^2)^2},$ we have the representative    $\langle \nabla_1+\nabla_4\rangle$;
\end{enumerate}

\item $ \alpha _4^2+\alpha _5^2 = 0,$ i.e., $\alpha_5=\pm i\alpha_4,$ then by choosing \begin{center}
    $t=\frac{\alpha_1y^2-\alpha_2x^2-\alpha_3xy}{\alpha_4(x\pm iy)}, \ z=\frac{-2\alpha_1xy-2\alpha_2xy+\alpha_3(x^2-y^2)+\alpha_4(-y \pm i x)}{\alpha_4(x\pm iy)}, $
    \end{center}
    we have
    \begin{center}
    $\alpha^*_2= \alpha^*_3=0,$   $\alpha^*_1=-\frac{(\alpha_1+\alpha_2\pm i\alpha_3)(x^2+y^2)^2}{(x\pm iy)^2},$\\ $\alpha^*_4=\left(x^2+y^2\right) \left(x\pm iy\right)\alpha_4,$ $ \alpha^*_5=\pm i \left(x^2+y^2\right) \left(x\pm iy\right)\alpha_4.$
    \end{center}

\begin{enumerate}
\item $\alpha_1+\alpha_2\pm i\alpha_3=0$, then we have representative $\langle \nabla_4\pm i\nabla_5\rangle$.

\item $\alpha_1+\alpha_2\pm i\alpha_3\neq0$, then by choosing $x=-\frac{\alpha_1+\alpha_2\pm i\alpha_3}{\alpha_4}$ and $y=0,$ we have representative 
\begin{center}$\langle \nabla_1+\nabla_4\pm i\nabla_5\rangle$.
\end{center}
\end{enumerate}

Since the automorphism $\phi=diag(1,-1,1)$ acts as
\begin{longtable}{lll}
$\phi(\nabla_4+ i \nabla_5)=\langle \nabla_4- i \nabla_5\rangle$ & and & $\phi(\nabla_1+ \nabla_4+ i \nabla_5)=\langle \nabla_1+ \nabla_4- i \nabla_5\rangle$,
\end{longtable}
we have two representatives of distinct orbits  $\langle \nabla_4+ i\nabla_5\rangle$ and $\langle \nabla_1+ \nabla_4+ i \nabla_5\rangle.$

\end{enumerate}

Hence, we have the following distints orbits
\begin{center}
$\langle \nabla_4 \rangle,$
$\langle \nabla_1+\nabla_4\rangle,$ 
$\langle \nabla_4+i\nabla_5\rangle$ and
$\langle \nabla_1+ \nabla_4+ i \nabla_5\rangle$,
\end{center}
which give the following new algebras:

\begin{longtable}{llllllll}
${\bf L}^4_{05}$ &: &
$e_1 e_1 = e_3$&  $e_2 e_2=e_3$&  $e_3e_1=e_4$ \\
\hline ${\bf L}^4_{06}$ &: &
$e_1 e_1 = e_3$&  $e_1 e_2=e_4$&$e_2 e_2=e_3$& $e_3e_1=e_4$ \\
\hline${\bf L}^4_{07}$ &: &
$e_1 e_1 = e_3$&  $e_2 e_2=e_3$& $e_3 e_1=e_4$&$e_3 e_2= ie_4$ \\
\hline ${\bf L}^4_{08}$ &: &
$e_1 e_1 = e_3$& $e_1 e_2=e_4$& $e_2 e_2=e_3$& $e_3 e_1=e_4$&$e_3 e_2= ie_4$
\end{longtable}

\subsubsection{Central extensions of ${\bf L}^{3*}_{03}$}\label{ext-N^{3*}_{03}}
	Let us use the following notations:
	$$
	\nabla_1 =[\Delta_{11}], \quad  \nabla_2 =[\Delta_{21}], \quad \nabla_3 =[\Delta_{22}],\quad \nabla_4 =[\Delta_{31}-2\Delta_{13}], \quad \nabla_5 =[\Delta_{32}-2\Delta_{23}].
	$$
	Take $\theta=\sum\limits_{i=1}^5\alpha_i\nabla_i\in {\rm H}_{\bf L}^2({\bf L}^{3*}_{03}).$
	The automorphism group of ${\bf L}^{3*}_{03}$ consists of invertible matrices of the form
	$$
	\phi=
	\begin{pmatrix}
	x &    u  &  0\\
	y &  v  & 0 \\
	z &  t  &  xv-yu
	\end{pmatrix}.
	$$

Since
	$$
	\phi^T\begin{pmatrix}
	\alpha_1   &  0    & -2\alpha_4\\
	\alpha_2  &\alpha_3 & -2\alpha_5 \\
	\alpha_4&  \alpha_5& 0 \\
	\end{pmatrix} \phi=\begin{pmatrix}
	\alpha_1^*   &  \alpha^*   & -2\alpha_4^*\\
	\alpha_2^* -\alpha^* &\alpha_3^* & -2\alpha_5^* \\
	\alpha_4^*&  \alpha_5^*& 0
	\end{pmatrix},
	$$
we have that the action of ${\rm Aut} ({\bf L}^{3*}_{03})$ on the subspace
$\langle \sum\limits_{i=1}^5\alpha_i\nabla_i  \rangle$
is given by
$\langle \sum\limits_{i=1}^5\alpha_i^{*}\nabla_i\rangle,$
where
	\[\begin{array}{rcl}\alpha^*_1&=&\alpha _1 x^2+\alpha _2 x y+ \alpha _3 y^2-\alpha _4 x z-\alpha _5y z,\\
	\alpha_2^*&=&2\alpha_1xu+\alpha _2(xv+yu)+2\alpha_3yv-\alpha _4 (xt+zu)- \alpha _5(yt+zv),\\
	\alpha_3^*&=&\alpha _1 u^2+\alpha_2uv+\alpha_3 v^2 -\alpha _4ut-\alpha_5 vt,\\
    \alpha^*_4&=&\left(\alpha _4 x+\alpha _5 y\right)(xv-yu),\\
	\alpha_5^*&=&\left(\alpha_4u+\alpha_5v\right)(xv-yu).\end{array}\]

Since ${\rm H^2_{\bf L}({\bf L}_{03}^{3*})} = {\rm H^2_{\bf N}({\bf L}_{03}^{3*})}\oplus \langle\nabla_4, \nabla_5\rangle$, we have  $(\alpha_4, \alpha_5)\neq(0,0).$
Moreover, without loss of generality, one can assume $\alpha_4\neq0$.
Choosing 
\begin{center}$u=-\frac{\alpha_5v}{\alpha_4},$   $z=\frac{\alpha_1x^2 +\alpha_2xy+\alpha_3y^2}{\alpha_4x+\alpha_5y}$ and $t=\frac{2\alpha_1xu+\alpha_2(xv+yu) +2\alpha_3yv}{\alpha_4x+\alpha_5y},$
\end{center}
we obtain
\begin{center}
$\alpha^*_1=\alpha^*_2=\alpha^*_5=0,$ 
$\alpha_3^*=\frac{(\alpha _1\alpha_5^2-\alpha_2\alpha_4\alpha_5+\alpha_3\alpha_4^2)v^2}{\alpha_4^2}$ and $\alpha^*_4=\frac{v(\alpha _4 x+\alpha _5 y)^2}{\alpha _4}.$
\end{center}

Then we have the following cases.
\begin{enumerate}
\item $\alpha _1\alpha_5^2-\alpha_2\alpha_4\alpha_5+\alpha_3\alpha_4^2 = 0,$ then we have the representative
            $\langle \nabla_4 \rangle$.

    \item $\alpha _1\alpha_5^2-\alpha_2\alpha_4\alpha_5+\alpha_3\alpha_4^2 \neq 0,$ then by choosing $v=\frac{\alpha_4(\alpha _4 x+\alpha _5 y)^2}{(\alpha _1\alpha_5^2-\alpha_2\alpha_4\alpha_5+\alpha_3\alpha_4^2)}$, we have the representative
 $\langle \nabla_3 +\nabla_4\rangle$.
\end{enumerate}

Summarizing, we have the following distinct orbits
\begin{center}
$\langle \nabla_4\rangle$ and
$\langle \nabla_3 +\nabla_4\rangle.$
\end{center}
Hence, we have the following new algebras:

\begin{longtable}{llllllll}
${\bf L}^4_{09}$ &: &
$e_1 e_2 = e_3$&  $e_1 e_3=-2e_4$&  $e_2e_1=-e_3$&  $e_3e_1=e_4$ \\
\hline ${\bf L}^4_{10}$ &: &
$e_1 e_2 = e_3$&$e_1 e_3=-2 e_4$&  $e_2 e_1=-e_3$&$e_2 e_2=e_4$&  $e_3e_1=e_4$
\end{longtable}

\subsubsection{Central extensions of ${\bf L}^{3*}_{04}(\lambda)_{\lambda\neq0}$}
	Let us use the following notations:
	$$
	\nabla_1 = [\Delta_{12}], \quad \nabla_2 =[\Delta_{21}], \quad \nabla_3 =[\Delta_{22}], \quad \nabla_4 =[\Delta_{13}-\Delta_{31}-\Delta_{32}] ,\quad \nabla_5 =[\Delta_{23}+\lambda\Delta_{31}].
	$$
	Take $\theta=\sum\limits_{i=1}^5\alpha_i\nabla_i\in {\rm H}_{\bf L}^2({\bf L}^{3*}_{04}(\lambda)_{\lambda\neq0}).$
	The automorphism group of ${\bf L}^{3*}_{04}(\lambda)_{\lambda\neq0}$ consists of invertible matrices of the form	
	$$
	\phi=
	\begin{pmatrix}
	x &    y  &  0\\
	-\lambda y &  x-y  & 0 \\
	z &  t  &  x^2-xy+\lambda y^2
	\end{pmatrix}.
	$$
	
	Since
	$$
	\phi^T\begin{pmatrix}
	0   &  \alpha_1    & \alpha_4\\
	\alpha_2  &\alpha_3 & \alpha_5 \\
	-\alpha_4+\lambda \alpha_5&  -\alpha_4& 0 \\
	\end{pmatrix} \phi=\begin{pmatrix}
	\lambda \alpha^*   &  \alpha_1^*    & \alpha_4^* \\
	\alpha^*+\alpha_2^*  &\alpha^*+\alpha_3^* & \alpha_5^* \\
	-\alpha_4^*+\lambda \alpha_5^*&  -\alpha_4^*& 0 \\
	\end{pmatrix},
	$$
	we have that the action of ${\rm Aut} ({\bf L}^{3*}_{04}(\lambda)_{\lambda\neq0})$ on the subspace
$\langle \sum\limits_{i=1}^5\alpha_i\nabla_i  \rangle$
is given by
$\langle \sum\limits_{i=1}^5\alpha_i^{*}\nabla_i\rangle,$
where
\[\begin{array}{rcl}\alpha^*_1&=&\alpha_1(x^2-xy)-\alpha_2\lambda y^2-\alpha_3(\lambda xy-\lambda y^2)+\alpha_4(xt-xz)+\alpha_5(\lambda yz-\lambda yt),\\
	\alpha_2^*&=&\alpha_1(xy-\lambda y^2)+\alpha_2x^2-\alpha_3\lambda  xy+\alpha_4(\lambda yt-xt)+\alpha_5\lambda xt,\\
	\alpha_3^*&=&\alpha_1(2xy-y^2)+\alpha_2(2xy-y^2) +\alpha_3\big((x-y)^2-\lambda y^2 \big)\\ & & +\alpha_4(yt-xt-yz)+\alpha_5(\lambda yt+xt-yt-xz+yz),\\
\alpha^*_4&=&(\alpha_4x-\alpha_5\lambda y)(x^2-xy+\lambda  y^2),\\	
\alpha_5^*&=&(\alpha_4y+\alpha_5 (x-y))(x^2-xy+\lambda  y^2).\end{array}\]

Since ${\rm H^2_{\bf L}({\bf L}_{04}^{3*}(\lambda)_{\lambda\neq0})} = {\rm H^2_{\bf N}({\bf L}_{04}^{3*}(\lambda)_{\lambda\neq0})}\oplus \langle\nabla_4, \nabla_5\rangle$, we have  $(\alpha_4, \alpha_5)\neq(0,0).$ Then we have the following cases.	
\begin{enumerate}
\item $\alpha_4=0$, then $\alpha_5\neq0$ and choosing $y=0,$  $t=-\frac{\alpha_2x}{\lambda\alpha_5},$ $z=\frac{(\alpha_3\lambda-\alpha_2)x}{\alpha_5\lambda}$, we have
\begin{center} 
$\alpha^*_2=\alpha_3^*=\alpha_4^*=0,$  $\alpha^*_1=\alpha_1x^2$ and $\alpha_5^*=\alpha_5x^3.$
\end{center}
	
\begin{enumerate}
	
\item $\alpha_1= 0,$ then we have the representative
 $\langle \nabla_5\rangle.$

\item $\alpha_1\neq 0,$ then by choosing $x=\frac{\alpha_1}{\alpha_5}$, we have the representative
 $\langle \nabla_1 +\nabla_5\rangle$.

\end{enumerate}

\item $\alpha_4\neq0,$ and $\alpha_4^2-\alpha_4\alpha_5+\alpha_5^2\lambda\neq0,$ then by choosing $x=\frac{\lambda\alpha_5}{\alpha_4}$ and $y=1$, we have $\alpha_4^*=0$ and it is the situation considered above.

  \item  $\alpha_4\neq0$ and $\alpha_4^2-\alpha_4\alpha_5+\alpha_5^2\lambda=0,$ then by choosing
\begin{center}
$y=0,$  $t= \frac{\alpha_2x}{\alpha_4-\lambda\alpha_5}$ and   $z=\frac{(\alpha_1 \alpha_4-\lambda \alpha_1 \alpha_5 + \alpha_2\alpha_4)x}{\alpha_4(\alpha_4-\lambda\alpha_5)},$
\end{center}
we have
\begin{center}
$\alpha_1^*= \alpha_2^*=0,$  $\alpha_3^*=\frac{\big((\alpha_4-\lambda\alpha_5)(\alpha_3 \alpha_4-\alpha_1 \alpha_5) -\alpha_2\alpha_4^2\big)x^2}{\alpha_4(\alpha_4-\lambda\alpha_5)},$  $\alpha_4^*=x^3 \alpha_4$ and $\alpha_5^*=x^3 \alpha_5$.
\end{center}

\begin{enumerate}
\item  $\alpha_2\alpha_4^2=(\alpha_4-\lambda\alpha_5)(\alpha_3 \alpha_4-\alpha_1 \alpha_5),$ then  we have the representative
 $\langle   \nabla_4+ \frac{1\pm\sqrt{1-4\lambda}}{2\lambda}  \nabla_5\rangle.$

\item $\alpha_2\alpha_4^2\neq(\alpha_4-\lambda\alpha_5)(\alpha_3 \alpha_4-\alpha_1 \alpha_5),$ then  we have the representative
\begin{center}
$\langle  \frac{1}{2\lambda} \nabla_3+  \nabla_4+ \frac{1\pm\sqrt{1-4\lambda}}{2\lambda} \nabla_5\rangle.$
\end{center}
\end{enumerate}
\end{enumerate}

Summarizing, we have the following distinct orbits
\begin{longtable}{lll}

$\langle   \nabla_5 \rangle$ &
$\langle    {2\lambda} \nabla_4+ {(1 -\sqrt{1-4\lambda})}  \nabla_5\rangle$ &
$\langle   \nabla_3+  {2\lambda} \nabla_4+ {(1 -\sqrt{1-4\lambda})}  \nabla_5\rangle$\\
$\langle  \nabla_1+   \nabla_5 \rangle$&
$\langle    {2\lambda} \nabla_4+ {(1 +\sqrt{1-4\lambda})}  \nabla_5\rangle$ &
$\langle  \nabla_3+  {2\lambda} \nabla_4+ {(1+\sqrt{1-4\lambda})}  \nabla_5\rangle.$

\end{longtable}

Hence, we have the following new algebras:
\begin{longtable}{llllllllll}
${\bf L}^4_{11}(\lambda)_{\lambda \neq 0}$ &: &
$e_1 e_1 = \lambda e_3$ &  $e_2 e_1=e_3$&  $e_2e_2=e_3 $\\
&&$e_2 e_3=e_4$ & $e_3 e_1=\lambda e_4$ \\
\hline
${\bf L}^4_{12}(\lambda)_{\lambda \neq 0}$ &: &
$e_1 e_1 = \lambda e_3$& $e_1 e_2=e_4$&  $e_2 e_1=e_3$ \\ &&  $e_2e_2=e_3 $  & $e_2e_3=e_4 $ & $e_3e_1=\lambda e_4 $ \\
\hline
${\bf L}^4_{13}(\lambda)_{\lambda \neq 0}$ &: &
$e_1 e_1 = \lambda e_3$&    $e_2e_2=e_3 $ & $e_2e_3=(1-\sqrt{1-4\lambda})e_4 $ & $e_3e_2=-2\lambda e_4 $ \\ & & $e_2 e_1=e_3$&    $e_1e_3=2\lambda e_4 $  & $e_3e_1=-\lambda(1+\sqrt{1-4\lambda})e_4 $ \\
\hline
${\bf L}^4_{14}(\lambda)_{\lambda \neq 0}$ &: &
$e_1 e_1 = \lambda e_3$&    $e_2e_2=e_3+e_4 $ & $e_2e_3=(1-\sqrt{1-4\lambda})e_4 $ & $e_3e_2=-2\lambda e_4 $ \\ & & $e_2 e_1=e_3$&    $e_1e_3=2\lambda e_4 $  & $e_3e_1=-\lambda(1+\sqrt{1-4\lambda})e_4 $ \\
\hline
${\bf L}^4_{15}(\lambda)_{\lambda \neq 0}$ &: &
$e_1 e_1 = \lambda e_3$&    $e_2e_2=e_3 $ & $e_2e_3=(1+\sqrt{1-4\lambda})e_4 $ & $e_3e_2=-2\lambda e_4 $ \\ & & $e_2 e_1=e_3$&    $e_1e_3=2\lambda e_4 $  & $e_3e_1=-\lambda(1-\sqrt{1-4\lambda})e_4 $ \\
\hline
${\bf L}^4_{16}(\lambda)_{\lambda \neq 0}$ &: &
$e_1 e_1 = \lambda e_3$&    $e_2e_2=e_3 +e_4$ & $e_2e_3=(1+\sqrt{1-4\lambda})e_4 $ & $e_3e_2=-2\lambda e_4 $ \\ & & $e_2 e_1=e_3$&    $e_1e_3=2\lambda e_4 $  & $e_3e_1=-\lambda(1-\sqrt{1-4\lambda})e_4 $ \\
\end{longtable}

	\subsubsection{Central extensions of ${\bf L}^{3*}_{05}$}
	Let us use the following notations:
	$$
	\nabla_1 = [\Delta_{12}], \quad \nabla_2 = [\Delta_{13}-\Delta_{31}], \quad \nabla_3 =[\Delta_{22}+\Delta_{31}], \quad \nabla_4 =[\Delta_{23}].
	$$
	Take $\theta=\sum\limits_{i=1}^4\alpha_i\nabla_i\in {\rm H}_{\bf L}^2({\bf L}^{3*}_{05}).$
	The automorphism group of ${\bf L}^{3*}_{05}$ consists of invertible matrices of the form
	$$
	\phi=
	\begin{pmatrix}
	x &    0  &  0\\
	y &  x^2  & 0 \\
	z & xy  &  x^3
	\end{pmatrix}.
	$$
	
	Since
	$$
	\phi^T\begin{pmatrix}
	0   &  \alpha_1    & \alpha_2\\
	0  &\alpha_3 & \alpha_4 \\
	-\alpha_2+\alpha_3& 0 & 0 \\
	\end{pmatrix} \phi=	\begin{pmatrix}
	\alpha^*   &  \alpha_1^*    & \alpha_2^*\\
	\alpha^{**}  &\alpha_3^* & \alpha_4^* \\
	-\alpha_2^*+\alpha_3^*& 0 & 0 \\
	\end{pmatrix},
	$$
we have that the action of ${\rm Aut} ({\bf L}^{3*}_{05})$ on the subspace $\langle \sum\limits_{i=1}^4\alpha_i\nabla_i  \rangle$
is given by
$\langle \sum\limits_{i=1}^4\alpha_i^{*}\nabla_i\rangle,$
where
\begin{longtable}{ll}	
$\alpha^*_1=\alpha _1 x^3+(\alpha _2 +\alpha _3) x^2y+\alpha _4 xy^2,$&
$\alpha^*_2=\alpha _2x^4+\alpha _4x^3y,$\\
$\alpha_3^*=\alpha _3x^4+\alpha _4x^3 y,$& 
$\alpha_4^*=\alpha _4 x^5.$
\end{longtable}

Since ${\rm H^2_{\bf L}({\bf L}^{3*}_{05})} = {\rm H^2_{\bf N}({\bf L}^{3*}_{05})}\oplus \langle\nabla_3, \nabla_4\rangle$, we have  $(\alpha_3, \alpha_4)\neq(0,0).$ Then we have the following cases.
\begin{enumerate}
\item $\alpha_4=0,$ then
\begin{enumerate}
	\item if $\alpha_2+\alpha_3\neq0,$ then by choosing $y=-\frac{\alpha_1x}{\alpha_2+\alpha_3}$, we have the representative
 $\langle \alpha\nabla_2 +\nabla_3\rangle_{\alpha\neq -1}$;

\item if $ \alpha_2+\alpha_3=0,$ then we have the representatives
 $\langle -\nabla_2 +\nabla_3\rangle$ and $\langle \nabla_1-\nabla_2 +\nabla_3\rangle$  depending on whether $\alpha_1=0$ or not.
\end{enumerate}

\item if $\alpha_4\neq0,$ then by choosing $y=-\frac{\alpha_3x}{\alpha_4},$ we have
\begin{center}
$\alpha^*_1=\frac{(\alpha_1\alpha_4-\alpha_2\alpha_3)x^3}{\alpha_4},$ $\alpha^*_2=(\alpha_2-\alpha_3)x^4,$ 
$\alpha_3^*=0,$ and $\alpha_4^*=\alpha_4x^5,$
\end{center}

\begin{enumerate}
	
\item if $\alpha_1\alpha_4-\alpha_2\alpha_3=0, \ \alpha_2-\alpha_3=0,$ then we have the representative
 $\langle \nabla_4\rangle;$

\item if $\alpha_1\alpha_4-\alpha_2\alpha_3=0, \ \alpha_2-\alpha_3\neq0,$ then we have the representative
 $\langle \nabla_2 +\nabla_4\rangle$;

\item if $\alpha_1\alpha_4-\alpha_2\alpha_3\neq0,$ then we have the representative
 $\langle \nabla_1 + \alpha\nabla_2 +\nabla_4\rangle$.

\end{enumerate}
\end{enumerate}
Hence, we have the following distints orbits
\begin{center}
$\langle \alpha\nabla_2+\nabla_3\rangle,$ 
$\langle \nabla_1 - \nabla_2+\nabla_3\rangle,$ 
$\langle \nabla_4\rangle,$ 
$\langle \nabla_2+ \nabla_4\rangle,$ and $\langle \nabla_1 + \alpha\nabla_2 +\nabla_4\rangle,$
\end{center}
which give the following new algebras:

\begin{longtable}{llllllll}
${\bf L}^4_{17}(\alpha)$ &: &
$e_1 e_1 = e_2$&  $e_1 e_3=\alpha e_4$&$e_2 e_1=e_3$& $e_2 e_2=e_4$& \multicolumn{2}{l}{$e_3e_1=(1-\alpha)e_4$} \\
\hline ${\bf L}^4_{18}$ &: &
$e_1 e_1 = e_2$&$e_1 e_2=e_4$& $e_1 e_3=-e_4$& $e_2 e_1=e_3$&$e_2 e_2=e_4$& $e_3e_1=2e_4$ \\
\hline ${\bf L}^4_{19}$ &: &
$e_1 e_1 = e_2$&  $e_2 e_1=e_3$& $e_2 e_3=e_4$ \\
\hline ${\bf L}^4_{20}$ &: &
$e_1 e_1 = e_2$&  $e_1 e_3=e_4$&$e_2 e_1=e_3$& $e_2 e_3=e_4$& $e_3 e_1=-e_4$ \\
\hline ${\bf L}^4_{21}(\alpha)$ &: &
$e_1 e_1 = e_2$& $e_1 e_2=e_4$&$e_1 e_3=\alpha e_4$&$e_2 e_1=e_3$&$e_2 e_3=e_4$& $e_3 e_1=-\alpha e_4$\\
\end{longtable}

\subsubsection{Central extensions of ${\bf L}^{3*}_{06}(\lambda)$}
	Let us use the following notations:
	\begin{longtable}{lll} $\nabla_1 = [\Delta_{21}]$ &  $\nabla_2 =[(2-\lambda)\Delta_{13}+\lambda\Delta_{22}+\lambda\Delta_{31}]$ & $\nabla_3 = [\Delta_{22}+ \Delta_{13}-\Delta_{31}]$.\end{longtable}
	
Take $\theta=\sum\limits_{i=1}^3\alpha_i\nabla_i\in {\rm H}_{\bf L}^2({\bf L}^{3*}_{06}(\lambda)).$ 	The automorphism group of ${\bf L}^{3*}_{06}(\lambda)$ consists of invertible matrices of the form
	$$
	\phi=
	\begin{pmatrix}
	x &    0  & 0\\
	y &  x^2  & 0 \\
	z & xy(1+\lambda)  &  x^3
	\end{pmatrix}.
	$$
	
	Since
	$$
	\phi^T\begin{pmatrix}
	0   &  0    & (2-\lambda)\alpha_2+\alpha_3\\
	\alpha_1  &\lambda\alpha_2+\alpha_3 & 0 \\
	\lambda\alpha_2-\alpha_3& 0 & 0 \\
	\end{pmatrix} \phi=	\begin{pmatrix}
	\alpha^{**}   &  \alpha^{*}    & (2-\lambda)\alpha_2^*+\alpha_3^*\\
	\alpha_1^*+\lambda\alpha^{*}  &\lambda\alpha_2^*+\alpha_3^* & 0 \\
	\lambda\alpha_2^*-\alpha_3^*& 0 & 0 \\
	\end{pmatrix},
	$$
we have that the action of ${\rm Aut} ({\bf L}^{3*}_{06}(\lambda))$ on the subspace $\langle \sum\limits_{i=1}^3\alpha_i\nabla_i  \rangle$
is given by
$\langle \sum\limits_{i=1}^3\alpha_i^{*}\nabla_i\rangle,$
where	
\begin{longtable}{lll}	
$\alpha^*_1=\alpha _1 x^3 +\big(\alpha _2(\lambda^3-\lambda ^2) -\alpha _3 (\lambda^2+3\lambda)\big) x^2y$& 
$\alpha^*_2=\alpha _2 x^4$&
$\alpha_3^*=\alpha _3 x^4.$\end{longtable}
	
Since ${\rm H^2_{\bf L}}({\bf L}^{3*}_{06}(\lambda)) = {\rm H^2_{\bf N}}({\bf L}^{3*}_{06}(\lambda))\oplus \langle \nabla_3\rangle$, we have  $\alpha_3\neq 0.$ Then we have the following cases	
\begin{enumerate}
\item if $\lambda=0,$ then we have the representatives $\langle \alpha \nabla_2 +\nabla_3\rangle$ and $\langle \nabla_1+\alpha \nabla_2 +\nabla_3\rangle$ depending on whether $\alpha_1=0$ or not.

\item if $\lambda=1,$ then by choosing $y=\frac{\alpha_1 x}{4\alpha_3},$ we have the representative $\langle \alpha \nabla_2 +\nabla_3\rangle$.

\item if $\lambda\notin\{0;1\}$, then:
 \begin{enumerate}
 \item if  $\alpha _2(\lambda^3-\lambda ^2) -\alpha _3 (\lambda^2+3\lambda)\neq 0,$ then by choosing $y=\frac{\alpha_1 x}{\alpha_3(\lambda^2+3\lambda) - \alpha_2(\lambda^3-\lambda_2)},$ we have the representative $\langle \alpha \nabla_2 +\nabla_3\rangle_{\alpha\neq \frac{\lambda+3}{\lambda(\lambda-1)}}$;
 \item if $\alpha _2(\lambda^3-\lambda ^2) -\alpha _3 (\lambda^2+3\lambda)= 0,$ then  we have the representatives \begin{center}
     $\langle \frac{\lambda+3}{\lambda(\lambda-1)} \nabla_2 +\nabla_3\rangle$ and $\langle \frac{1}{\lambda(\lambda-1)}\nabla_1+\frac{\lambda+3}{\lambda(\lambda-1)}\nabla_2+\nabla_3\rangle,$
     \end{center} depending on whether $\alpha_1=0$ or not.
\end{enumerate}
\end{enumerate}

Thus, we have the following orbits:
\begin{center}  
$\langle \nabla_1+\alpha\nabla_2+\nabla_3\rangle_{\lambda=0},$  $\langle\alpha\nabla_2+\nabla_3\rangle,$ and  
$\langle \nabla_1+(\lambda+3)\nabla_2+\lambda(\lambda-1)\nabla_3\rangle_{\lambda\neq 0;1}$.\end{center}
Hence, we have the following new algebras:
\small
\begin{longtable}{lllllllllll}
${\bf L}^4_{22}(\alpha)$ &: &$e_1 e_1 = e_2$&$e_1 e_2=e_3$& $e_1 e_3=(2\alpha+1) e_4$ \\ & &$e_2 e_1=e_4$&$e_2 e_2=e_4$ &$e_3 e_1=-e_4$  \\ \hline
${\bf L}^4_{23}(\lambda,\alpha)$ &: &
$e_1 e_1 = e_2$&$e_1 e_2=e_3$& $e_1 e_3=\big((2-\lambda)\alpha+1\big) e_4$ \\ && $e_2 e_1=\lambda e_3$& $e_2 e_2=(\lambda\alpha+1)e_4$&$e_3 e_1=(\lambda\alpha-1)e_4$ \\ \hline
${\bf L}^4_{24}(\lambda)_{\lambda \notin\{0;1\}}$ &: &
$e_1 e_1 = e_2$&$e_1 e_2=e_3$& $e_1 e_3=2(3-\lambda)e_4$ \\ & & $e_2 e_1=\lambda e_3+e_4$&
$e_2 e_2=2\lambda(\lambda+1)e_4$&$e_3 e_1=4\lambda e_4$ \\
\end{longtable}

\subsection{Classification theorem}
Now we are ready summarize all results related to the algebraic classification of complex $4$-dimensional nilpotent left-symmetric  algebras.

\begin{theoremA}
Let ${\bf L}$ be a complex  $4$-dimensional nilpotent left-symmetric algebra.
Then either ${\bf L}$ is a Novikov algebra or   it is isomorphic to one algebra from the following list:

\begin{longtable}{lllllllllll}
${\bf L}^4_{01}$ &$:$ &
$e_1 e_1 = e_2$&  $e_1 e_2=e_4$& $e_2 e_3=e_4$& $e_3e_1=e_4$ \\
\hline${\bf L}^4_{02}$ &$:$ &
$e_1 e_1 = e_2$&  $e_2 e_3=e_4$& $e_3e_1=e_4$ \\
\hline${\bf L}^4_{03}$ &$:$ &
$e_1 e_1 = e_2$&  $e_1 e_2=e_4$& $e_2 e_3=e_4$ \\
\hline${\bf L}^4_{04}$ &$:$ &
$e_1 e_1 = e_2$& $e_2 e_3=e_4$\\
\hline${\bf L}^4_{05}$ &$:$ &
$e_1 e_1 = e_3$&  $e_2 e_2=e_3$&  $e_3e_1=e_4$ \\
\hline ${\bf L}^4_{06}$ &$:$ &
$e_1 e_1 = e_3$&  $e_1 e_2=e_4$&$e_2 e_2=e_3$& $e_3e_1=e_4$ \\
\hline${\bf L}^4_{07}$ &$:$ &
$e_1 e_1 = e_3$&  $e_2 e_2=e_3$& $e_3 e_1=e_4$&$e_3 e_2= ie_4$ \\
\hline ${\bf L}^4_{08}$ &$:$ &
$e_1 e_1 = e_3$& $e_1 e_2=e_4$& $e_2 e_2=e_3$& $e_3 e_1=e_4$&$e_3 e_2= ie_4$\\
\hline
${\bf L}^4_{09}$ &$:$ &
$e_1 e_2 = e_3$&  $e_1 e_3=-2e_4$&  $e_2e_1=-e_3$&  $e_3e_1=e_4$ \\
\hline ${\bf L}^4_{10}$ &$:$ &
$e_1 e_2 = e_3$&$e_1 e_3=-2 e_4$&  $e_2 e_1=-e_3$&$e_2 e_2=e_4$&  $e_3e_1=e_4$\\
\hline ${\bf L}^4_{11}(\lambda)_{\lambda \neq 0}$ &$:$ &
$e_1 e_1 = \lambda e_3$ &  $e_2 e_1=e_3$&  $e_2e_2=e_3 $&$e_2 e_3=e_4$ & $e_3 e_1=\lambda e_4$ \\
\hline
${\bf L}^4_{12}(\lambda)_{\lambda \neq 0}$ &$:$ &
$e_1 e_1 = \lambda e_3$& $e_1 e_2=e_4$&  $e_2 e_1=e_3$&\\
&&$e_2e_2=e_3 $ & $e_2e_3=e_4 $ & $e_3e_1=\lambda e_4 $ \\
\hline
${\bf L}^4_{13}(\lambda)_{\lambda \neq 0}$ &$:$ &
$e_1 e_1 = \lambda e_3$&    $e_2e_2=e_3 $ & \multicolumn{2}{l}{$e_2e_3=(1-\sqrt{1-4\lambda})e_4 $} & $e_3e_2=-2\lambda e_4 $ \\ & & $e_2 e_1=e_3$&    $e_1e_3=2\lambda e_4 $  & \multicolumn{2}{l}{$e_3e_1=-\lambda(1+\sqrt{1-4\lambda})e_4 $} \\
\hline
${\bf L}^4_{14}(\lambda)_{\lambda \neq 0}$ &$:$ &
$e_1 e_1 = \lambda e_3$&    $e_2e_2=e_3+e_4 $ & 
\multicolumn{2}{l}{$e_2e_3=(1-\sqrt{1-4\lambda})e_4 $} & $e_3e_2=-2\lambda e_4 $ \\ & & $e_2 e_1=e_3$&    $e_1e_3=2\lambda e_4 $  & \multicolumn{2}{l}{$e_3e_1=-\lambda(1+\sqrt{1-4\lambda})e_4 $} \\
\hline
${\bf L}^4_{15}(\lambda)_{\lambda \neq 0}$ &$:$ &
$e_1 e_1 = \lambda e_3$&    $e_2e_2=e_3 $ & 
\multicolumn{2}{l}{$e_2e_3=(1+\sqrt{1-4\lambda})e_4 $} & $e_3e_2=-2\lambda e_4 $ \\ & & $e_2 e_1=e_3$&    $e_1e_3=2\lambda e_4 $  & \multicolumn{2}{l}{$e_3e_1=-\lambda(1-\sqrt{1-4\lambda})e_4 $} \\
\hline
${\bf L}^4_{16}(\lambda)_{\lambda \neq 0}$ &$:$ &
$e_1 e_1 = \lambda e_3$&    $e_2e_2=e_3 +e_4$ & 
\multicolumn{2}{l}{$e_2e_3=(1+\sqrt{1-4\lambda})e_4 $} & $e_3e_2=-2\lambda e_4 $ \\ & & $e_2 e_1=e_3$&    $e_1e_3=2\lambda e_4 $  & \multicolumn{2}{l}{$e_3e_1=-\lambda(1-\sqrt{1-4\lambda})e_4 $} \\
\hline
${\bf L}^4_{17}(\alpha)$ &$:$ &
$e_1 e_1 = e_2$&  $e_1 e_3=\alpha e_4$&$e_2 e_1=e_3$& $e_2 e_2=e_4$& $e_3e_1=(1-\alpha)e_4$ \\
\hline ${\bf L}^4_{18}$ &$:$ &
$e_1 e_1 = e_2$&$e_1 e_2=e_4$& $e_1 e_3=-e_4$&\\
&&$e_2 e_1=e_3$&$e_2 e_2=e_4$& $e_3e_1=2e_4$ \\
\hline ${\bf L}^4_{19}$ &$:$ &
$e_1 e_1 = e_2$&  $e_2 e_1=e_3$& $e_2 e_3=e_4$ \\
\hline ${\bf L}^4_{20}$ &$:$ &
$e_1 e_1 = e_2$&  $e_1 e_3=e_4$&$e_2 e_1=e_3$& $e_2 e_3=e_4$& $e_3 e_1=-e_4$ \\
\hline ${\bf L}^4_{21}(\alpha)$ &$:$ &
$e_1 e_1 = e_2$& $e_1 e_2=e_4$&$e_1 e_3=\alpha e_4$&\\
&&$e_2 e_1=e_3$&$e_2 e_3=e_4$& $e_3 e_1=-\alpha e_4$\\
\hline
${\bf L}^4_{22}(\alpha)$ &$:$ &
$e_1 e_1 = e_2$&$e_1 e_2=e_3$& \multicolumn{2}{l}{$e_1 e_3=(2\alpha+1) e_4$}  \\
&&$e_2 e_1=e_4$&$e_2 e_2=e_4$ &$e_3 e_1=-e_4$  \\ \hline
${\bf L}^4_{23}(\lambda,\alpha)$ &$:$ &
$e_1 e_1 = e_2$&$e_1 e_2=e_3$& \multicolumn{2}{l}{$e_1 e_3=\big((2-\lambda)\alpha+1\big) e_4$} \\
&& $e_2 e_1=\lambda e_3$& $e_2 e_2=(\lambda\alpha+1)e_4$&
\multicolumn{2}{l}{$e_3 e_1=(\lambda\alpha-1)e_4$} \\ \hline
${\bf L}^4_{24}(\lambda)_{\lambda \notin\{0;1\}}$ &$:$ &
$e_1 e_1 = e_2$&$e_1 e_2=e_3$& 
\multicolumn{2}{l}{$e_1 e_3=2(3-\lambda)e_4$}  \\
&& $e_2 e_1=\lambda e_3+e_4$&
$e_2 e_2=2\lambda(\lambda+1)e_4$&$e_3 e_1=4\lambda e_4$ \\

\end{longtable}

\end{theoremA}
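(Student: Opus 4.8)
The plan is to apply the Skjelbred--Sund machinery set up above. Since a nonzero nilpotent algebra always has a nonzero annihilator, the reduction lemma presents any $4$-dimensional nilpotent left-symmetric algebra $\mathbf{L}$ as a central extension $\mathbf{A}'_{\theta}$ of $\mathbf{A}' = \mathbf{L}/\operatorname{Ann}(\mathbf{L})$, where $\mathbf{A}'$ has dimension $4 - m$ with $m = \dim\operatorname{Ann}(\mathbf{L})$, the map $\theta$ lies in $\mathrm{Z}^2(\mathbf{A}',\mathbb{V})$, and $\operatorname{Ann}(\mathbf{A}')\cap\operatorname{Ann}(\theta) = 0$. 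The first task is to reduce to the case $m = 1$.

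The key point is that a non-Novikov $\mathbf{L}$ must have $m = 1$. If instead $m \geq 2$, then $\mathbf{A}'$ is a nilpotent left-symmetric algebra of dimension at most $2$, hence is either abelian or isomorphic to $\mathbf{L}^{2*}_{01}$. For each of these algebras I would check, directly from the cocycle equation and the Novikov identity, that the Novikov-cocycle condition $\theta(xy,z) = \theta(xz,y)$ is already implied by the left-symmetric cocycle condition, so that $\mathrm{Z}^2_{\mathbf N}(\mathbf{A}',\mathbb{C}) = \mathrm{Z}^2_{\mathbf L}(\mathbf{A}',\mathbb{C})$ and therefore $\mathrm{H}^2_{\mathbf N}(\mathbf{A}',\mathbb{C}) = \mathrm{H}^2_{\mathbf L}(\mathbf{A}',\mathbb{C})$ and $\mathbf{U}_m(\mathbf{A}') = \varnothing$. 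Now if $\mathbf{L}$ were split it would be the direct sum of a nilpotent left-symmetric algebra of dimension at most $3$ and a central zero-product ideal, hence Novikov, since every nilpotent left-symmetric algebra of dimension at most $3$ is a Novikov algebra; and if $\mathbf{L}$ were non-split it would determine a point of $\mathbf{U}_m(\mathbf{A}')$, which is empty. Either way $\mathbf{L}$ would be Novikov, contradicting the hypothesis. I would record this as a short lemma; it shows that every non-Novikov $\mathbf{L}$ is a one-dimensional non-split central extension of a $3$-dimensional nilpotent left-symmetric algebra.

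It then remains to run the refined Procedure on each of the six algebras $\mathbf{L}^{3*}_{01},\dots,\mathbf{L}^{3*}_{06}(\lambda)$, all of which are Novikov. For a fixed base the non-Novikov extensions correspond bijectively to the $\operatorname{Aut}(\mathbf{A}')$-orbits on $\mathbf{U}_1(\mathbf{A}') = \mathbf{T}_1(\mathbf{A}')\setminus G_1\big(\mathrm{H}^2_{\mathbf N}(\mathbf{A}',\mathbb{C})\big)$, by the orbit--isomorphism correspondence. Using the already tabulated cohomology spaces, I would write a generic class $\theta = \sum_i \alpha_i\nabla_i$ with nonzero component in the non-Novikov directions, compute the transformed coordinates $\alpha_i^{*}$ under the explicit automorphism group, and normalize to a finite list of orbit representatives; reconstructing the multiplication tables produces exactly $\mathbf{L}^4_{01},\dots,\mathbf{L}^4_{24}$. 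This is the family-by-family computation carried out in the preceding subsections.

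Finally I would confirm that the list is irredundant. Extensions of non-isomorphic bases are never isomorphic, since $\mathbf{L}/\operatorname{Ann}(\mathbf{L})$ is an isomorphism invariant of $\mathbf{L}$ and the six three-dimensional bases are pairwise non-isomorphic; within a fixed base the orbit--isomorphism correspondence guarantees that distinct representatives yield non-isomorphic algebras. The main obstacle is not any single conceptual step but the control of the orbit computation, especially for the parameter families $\mathbf{L}^{3*}_{04}(\lambda)$ and $\mathbf{L}^{3*}_{06}(\lambda)$: there the normalization branches according to the vanishing of expressions such as $\alpha_4^2 - \alpha_4\alpha_5 + \lambda\alpha_5^2$, and one must track carefully the resulting identifications among the parameters of $\mathbf{L}^4_{11}(\lambda),\dots,\mathbf{L}^4_{24}(\lambda)$ so that no orbit is listed twice and none is missed.
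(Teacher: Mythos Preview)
Your proposal is correct and follows essentially the same approach as the paper: the Skjelbred--Sund central-extension method applied to the six $3$-dimensional nilpotent (Novikov) bases, with the non-Novikov orbits in $\mathbf{U}_1$ producing exactly $\mathbf{L}^4_{01},\dots,\mathbf{L}^4_{24}$. Your explicit reduction lemma ruling out $m\geq 2$ (by showing $\mathrm{H}^2_{\mathbf N}=\mathrm{H}^2_{\mathbf L}$ for every nilpotent left-symmetric algebra of dimension at most $2$) is a point the paper leaves entirely implicit, so your write-up is if anything slightly more complete; just remember to also note that the zero algebra $\mathbb{C}^3$ and $\mathbf{L}^{3*}_{04}(0)$ contribute nothing for the same reason, so that the base-by-base analysis really does exhaust all possibilities.
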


\section{The geometric classification of nilpotent left-symmetric algebras}

\subsection{Definitions and notation}
Given an $n$-dimensional vector space $\mathbb V$, the set ${\rm Hom}(\mathbb V \otimes \mathbb V,\mathbb V) \cong \mathbb V^* \otimes \mathbb V^* \otimes \mathbb V$
is a vector space of dimension $n^3$. This space has the structure of the affine variety $\mathbb{C}^{n^3}.$ Indeed, let us fix a basis $e_1,\dots,e_n$ of $\mathbb V$. Then any $\mu\in {\rm Hom}(\mathbb V \otimes \mathbb V,\mathbb V)$ is determined by $n^3$ structure constants $c_{ij}^k\in\mathbb{C}$ such that
$\mu(e_i\otimes e_j)=\sum\limits_{k=1}^nc_{ij}^ke_k$. A subset of ${\rm Hom}(\mathbb V \otimes \mathbb V,\mathbb V)$ is {\it Zariski-closed} if it can be defined by a set of polynomial equations in the variables $c_{ij}^k$ ($1\le i,j,k\le n$).

Let $T$ be a set of polynomial identities.
The set of algebra structures on $\mathbb V$ satisfying polynomial identities from $T$ forms a Zariski-closed subset of the variety ${\rm Hom}(\mathbb V \otimes \mathbb V,\mathbb V)$. We denote this subset by $\mathbb{L}(T)$.
The general linear group $GL(\mathbb V)$ acts on $\mathbb{L}(T)$ by conjugation:
$$ (g * \mu )(x\otimes y) = g\mu(g^{-1}x\otimes g^{-1}y)$$
for $x,y\in \mathbb V$, $\mu\in \mathbb{L}(T)\subset {\rm Hom}(\mathbb V \otimes\mathbb V, \mathbb V)$ and $g\in GL(\mathbb V)$.
Thus, $\mathbb{L}(T)$ is decomposed into $GL(\mathbb V)$-orbits that correspond to the isomorphism classes of algebras.
Let $O(\mu)$ denote the orbit of $\mu\in\mathbb{L}(T)$ under the action of $GL(\mathbb V)$ and $\overline{O(\mu)}$ denote the Zariski closure of $O(\mu)$.

Let $\mathcal A$ and $\mathcal B$ be two $n$-dimensional algebras satisfying the identities from $T$, and let $\mu,\lambda \in \mathbb{L}(T)$ represent $\mathcal A$ and $\mathcal B$, respectively.
We say that $\mathcal A$ degenerates to $\mathcal B$ and write $\mathcal A\to \mathcal B$ if $\lambda\in\overline{O(\mu)}$.
Note that in this case we have $\overline{O(\lambda)}\subset\overline{O(\mu)}$. Hence, the definition of a degeneration does not depend on the choice of $\mu$ and $\lambda$. If $\mathcal A\not\cong \mathcal B$, then the assertion $\mathcal A\to \mathcal B$ is called a {\it proper degeneration}. We write $\mathcal A\not\to \mathcal B$ if $\lambda\not\in\overline{O(\mu)}$.

Let $\mathcal A$ be represented by $\mu\in\mathbb{L}(T)$. Then  $\mathcal A$ is  {\it rigid} in $\mathbb{L}(T)$ if $O(\mu)$ is an open subset of $\mathbb{L}(T)$.
 Recall that a subset of a variety is called irreducible if it cannot be represented as a union of two non-trivial closed subsets.
 A maximal irreducible closed subset of a variety is called an {\it irreducible component}.
It is well known that any affine variety can be represented as a finite union of its irreducible components in a unique way.
The algebra $\mathcal A$ is rigid in $\mathbb{L}(T)$ if and only if $\overline{O(\mu)}$ is an irreducible component of $\mathbb{L}(T)$.

Given the spaces $U$ and $W$, we write simply $U>W$ instead of $\dim \,U>\dim \,W$.

%We use also the notation $U\circ W=UW+WU$.

%We collect the required information about the algebras under consideration in Table 1. In this table in the first column we write the names of the algebras. For every $i$ the symbols $\mathfrak{N}_i$ and $\mathfrak{N}_i^{\mathbb{C}}$ stand for Leibniz-Zinbiel algebras;  $\mathfrak{L}_i$ and $\mathfrak{L}_i^{\mathbb{C}}$ stand for nilpotent Leibniz algebras,  $\mathfrak{Z}_i$ and $\mathfrak{Z}_i^{\mathbb{C}}$ stand for Zinbiel algebras. In the second column we give the names of these algebras from the articles \cite{rom,AKO10,alb06,dzhuma5}. In the third  column we give the multiplication tables in a fixed basis $e_1,\ldots,e_n$ of $V$. All the products of basis elemmaents omitted in the table are equal to zero. In the last columns we give dimensions of the algebra of derivations, of the square of an algebra, of the left annihilator, of the right annihilator, %of the anticommutative centre; %of a maximal zero subalgebra; %of the simmetrized algebra; and the center.

\subsection{Method of the description of  degenerations of algebras}

In the present work we use the methods applied to Lie algebras in \cite{BC99,GRH,GRH2,S90}.
First of all, if $\mathcal A\to \mathcal B$ and $\mathcal A\not\cong \mathcal B$, then $\mathfrak{Der}(\mathcal A)<\mathfrak{Der}(\mathcal B)$, where $\mathfrak{Der}(\mathcal A)$ is the Lie algebra of derivations of $\mathcal A$. We compute the dimensions of algebras of derivations and check the assertion $\mathcal A\to \mathcal B$ only for   $\mathcal A$ and $\mathcal B$ such that $\mathfrak{Der}(\mathcal A)<\mathfrak{Der}(\mathcal B)$.
%Secondly, if $A\to C$ and $C\to B$ then $A\to B$. If there is no $C$ such that $A\to C$ and $C\to B$ are proper degenerations, then the assertion $A\to B$ is called a {\it primary degeneration}. If $Der(A)<Der(B)$ and there are no $C$ and $D$ such that $C\to A$, $B\to D$, $C\not\to D$ and one of the assertions $C\to A$ and $B\to D$ is a proper degeneration,  then the assertion $A \not\to B$ is called a {\it primary non-degeneration}. It suffices to prove only primary degenerations and non-degenerations to describe degenerations in the variety under consideration. It is easy to see that any algebra degenerates to the algebra with zero multiplication. From now on we use this fact without mentioning it.

%Degenerations of four dimensional and nilpotent five and six dimensional Lie algebras were described in \cite{BC99,S90,GRH}.
%Since the set $\mathbb{L}(T)$ is closed for any $T$, a Lie algebra cannot degenerate to a non-Lie algebra.
%So when we want to add Malcev or BL algebras to Lie algebras we don't have to check the degenerations from Lie algebras to any of the added algebras.

To prove degenerations, we construct families of matrices parametrized by $t$. Namely, let $\mathcal A$ and $\mathcal B$ be two algebras represented by the structures $\mu$ and $\lambda$ from $\mathbb{L}(T)$ respectively. Let $e_1,\dots, e_n$ be a basis of $\mathbb  V$ and $c_{ij}^k$ ($1\le i,j,k\le n$) be the structure constants of $\lambda$ in this basis. If there exist $a_i^j(t)\in\mathbb{C}$ ($1\le i,j\le n$, $t\in\mathbb{C}^*$) such that $E_i^t=\sum\limits_{j=1}^na_i^j(t)e_j$ ($1\le i\le n$) form a basis of $\mathbb V$ for any $t\in\mathbb{C}^*$, and the structure constants of $\mu$ in the basis $E_1^t,\dots, E_n^t$ are such rational functions $c_{ij}^k(t)\in\mathbb{C}(t)$ that $c_{ij}^k(0)=c_{ij}^k$, then $\mathcal A\to \mathcal B$.
In this case  $E_1^t,\dots, E_n^t$ is called a {\it parametrized basis} for $\mathcal A\to \mathcal B$.
To simplify our equations, we will use the notation $A_i=\langle e_i,\dots,e_n\rangle,\ i=1,\ldots,n$ and write simply $A_pA_q\subset A_r$ instead of $c_{ij}^k=0$ ($i\geq p$, $j\geq q$, $k< r$).

%If the number of orbits under the action of $GL(\mathbb V)$ on  $\mathbb{L}(T)$ is finite, then the constructions of some %degenerations and some non-degenerations give the description of all rigid algebras and irreducible components.
Since the variety of $4$-dimensional nilpotent left-symmetric algebras  contains infinitely many non-isomorphic algebras, we have to do some additional work.
Let $\mathcal A(*):=\{\mathcal A(\alpha)\}_{\alpha\in I}$ be a series of algebras, and let $\mathcal B$ be another algebra. Suppose that for $\alpha\in I$, $\mathcal A(\alpha)$ is represented by the structure $\mu(\alpha)\in\mathbb{L}(T)$ and $B\in\mathbb{L}(T)$ is represented by the structure $\lambda$. Then we say that $\mathcal A(*)\to \mathcal B$ if 
$\lambda\in\overline{ \bigcup_{\alpha\in I}O(\mu(\alpha))}$, and $\mathcal A(*)\not\to \mathcal B$ if 
$\lambda\not\in\overline{\bigcup_{\alpha\in I}O(\mu(\alpha))}$.

Let $\mathcal A(*)$, $\mathcal B$, $\mu(\alpha)$ ($\alpha\in I$) and $\lambda$ be as above. To prove $\mathcal A(*)\to \mathcal B$ it is enough to construct a family of pairs $(f(t), g(t))$ parametrized by $t\in\mathbb{C}^*$, where $f(t)\in I$ and $g(t)\in GL(\mathbb V)$. Namely, let $e_1,\dots, e_n$ be a basis of $\mathbb V$ and $c_{ij}^k$ ($1\le i,j,k\le n$) be the structure constants of $\lambda$ in this basis. If we construct $a_i^j:\mathbb{C}^*\to \mathbb{C}$ ($1\le i,j\le n$) and $f: \mathbb{C}^* \to I$ such that $E_i^t=\sum\limits_{j=1}^na_i^j(t)e_j$ ($1\le i\le n$) form a basis of $\mathbb V$ for any  $t\in\mathbb{C}^*$, and the structure constants of $\mu_{f(t)}$ in the basis $E_1^t,\dots, E_n^t$ are such rational functions $c_{ij}^k(t)\in\mathbb{C}(t)$ that $c_{ij}^k(0)=c_{ij}^k$, then $\mathcal A(*)\to \mathcal B$. In this case  $E_1^t,\dots, E_n^t$ and $f(t)$ are called a parametrized basis and a {\it parametrized index} for $\mathcal A(*)\to \mathcal B$, respectively.

We now explain how to prove $\mathcal A(*)\not\to\mathcal  B$.
Note that if $\mathfrak{Der} \ \mathcal A(\alpha)  > \mathfrak{Der} \  \mathcal B$ for all $\alpha\in I,$ then $\mathcal A(*)\not\to\mathcal B$.
One can also use the following  Lemma, whose proof is the same as the proof of Lemma 1.5 from \cite{GRH}.

\begin{lemma}\label{gmain}
Let $\mathfrak{B}$ be a Borel subgroup of $GL(\mathbb V)$ and $\mathcal{R}\subset \mathbb{L}(T)$ be a $\mathfrak{B}$-stable closed subset.
If $\mathcal A(*) \to \mathcal B$ and for any $\alpha\in I$ the algebra $\mathcal A(\alpha)$ can be represented by a structure $\mu(\alpha)\in\mathcal{R}$, then there is $\lambda\in \mathcal{R}$ representing $\mathcal B$.
\end{lemma}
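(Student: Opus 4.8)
The plan is to reformulate the conclusion as a statement about orbits and then invoke the completeness of the flag variety $GL(\mathbb V)/\mathfrak{B}$. Write $G = GL(\mathbb V)$. Asserting that some element of $\mathcal{R}$ represents $\mathcal B$ is the same as asserting that $O(\lambda)\cap\mathcal{R}\neq\emptyset$, i.e. that the structure $\lambda$ representing $\mathcal B$ lies in the saturation $G\cdot\mathcal{R}=\{g\cdot r : g\in G,\ r\in\mathcal{R}\}$; indeed, if $\lambda=g\cdot r$ with $r\in\mathcal{R}$, then $r=g^{-1}\cdot\lambda\in O(\lambda)\cap\mathcal{R}$ is the desired representative. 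So the whole statement reduces to showing $\lambda\in G\cdot\mathcal{R}$.

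The key step is to prove that $G\cdot\mathcal{R}$ is Zariski-closed in $\mathbb{L}(T)$. This is exactly where the hypotheses that $\mathcal{R}$ is closed and $\mathfrak{B}$-stable are used, together with the fact that $G/\mathfrak{B}$ is a complete (projective) variety. Concretely, I would form the associated bundle $G\times_{\mathfrak{B}}\mathcal{R}$, the quotient of $G\times\mathcal{R}$ by the action $b\cdot(g,r)=(gb^{-1},\,b\cdot r)$, which fibers over $G/\mathfrak{B}$ with fiber $\mathcal{R}$, and consider the action morphism $\pi:G\times_{\mathfrak{B}}\mathcal{R}\to\mathbb{L}(T)$, $[g,r]\mapsto g\cdot r$. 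The assignment $[g,r]\mapsto(g\cdot r,\,g\mathfrak{B})$ embeds this bundle as a closed subset of $\mathbb{L}(T)\times G/\mathfrak{B}$, so $\pi$ factors as a closed immersion followed by the projection $\mathbb{L}(T)\times G/\mathfrak{B}\to\mathbb{L}(T)$. Since $G/\mathfrak{B}$ is complete, this projection is proper, hence $\pi$ is proper; in particular its image $\pi(G\times_{\mathfrak{B}}\mathcal{R})=G\cdot\mathcal{R}$ is closed.

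With closedness in hand, the degeneration hypothesis finishes the argument. By assumption each $\mu(\alpha)$ may be chosen inside $\mathcal{R}$, so $\bigcup_{\alpha\in I}O(\mu(\alpha))=\bigcup_{\alpha\in I}G\cdot\mu(\alpha)\subseteq G\cdot\mathcal{R}$. Taking Zariski closures and using that $G\cdot\mathcal{R}$ is already closed yields $\overline{\bigcup_{\alpha\in I}O(\mu(\alpha))}\subseteq G\cdot\mathcal{R}$. Since $\mathcal A(*)\to\mathcal B$ means precisely $\lambda\in\overline{\bigcup_{\alpha\in I}O(\mu(\alpha))}$, we conclude $\lambda\in G\cdot\mathcal{R}$, and by the reduction of the first paragraph this produces the required representative of $\mathcal B$ inside $\mathcal{R}$.

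I expect the only genuine obstacle to be the closedness of $G\cdot\mathcal{R}$: this is a standard but nontrivial fact from the theory of algebraic group actions (completeness of the flag variety together with properness of the associated-bundle map), and it is exactly the geometric ingredient borrowed from the proof of Lemma 1.5 in \cite{GRH}. Everything else is routine bookkeeping with orbit closures.
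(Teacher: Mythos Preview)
Your argument is correct and is precisely the standard completeness-of-the-flag-variety argument that the paper invokes by citing Lemma~1.5 of \cite{GRH} in lieu of a proof. The reduction to closedness of $G\cdot\mathcal{R}$ and the use of properness of the projection $\mathbb{L}(T)\times G/\mathfrak{B}\to\mathbb{L}(T)$ are exactly the ingredients of that reference, so nothing is missing or different.
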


\subsection{The geometric classification of $4$-dimensional nilpotent left-symmetric algebras}
The main result of the present section is the following theorem.

\begin{theoremB}\label{geobl}
The variety of $4$-dimensional nilpotent left-symmetric algebras  has 
dimension $15$ and it has 
three irreducible components
defined by infinite families of algebras 
${\bf L}^{4}_{12}(\lambda),$   ${\bf L}^{4}_{21}(\lambda)$ and ${\bf L}^{4}_{23}(\lambda,\alpha).$  
\end{theoremB}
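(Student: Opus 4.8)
The plan is to determine the variety $\mathbb{L}$ of $4$-dimensional nilpotent left-symmetric algebras as a union of irreducible components via the standard degeneration/dimension approach. The overall strategy has three stages: first compute the dimensions of the orbit closures of the candidate families; second establish enough degenerations $\mathcal A\to\mathcal B$ to show that all $26$-or-so algebras from Theorem A (together with the Novikov algebras, whose classification is imported from \cite{kkk18}) lie in the closure of the three named families $\mathbf L^4_{12}(\lambda)$, $\mathbf L^4_{21}(\lambda)$, $\mathbf L^4_{23}(\lambda,\alpha)$; and third prove the irreducibility and maximality of those three closures by ruling out the reverse degenerations, so that none of the three is contained in another or in any remaining family.

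First I would compute, for each algebra and each parametric family, the dimension of its derivation algebra $\mathfrak{Der}$ and hence the orbit dimension $\dim O(\mu)=n^2-\dim\mathfrak{Der}(\mu)=16-\dim\mathfrak{Der}(\mu)$. For a one-parameter family the closure of the union of orbits has dimension at most $\dim O(\mu(\lambda))+1$, and for the two-parameter family $\mathbf L^4_{23}(\lambda,\alpha)$ at most $\dim O+2$; I expect each of the three candidate closures to have dimension exactly $15$, which both pins down $\dim\mathbb{L}=15$ and forces these closures to be among the irreducible components. The generic orbit dimension computation is routine but must be done carefully for the parametric families, since $\dim\mathfrak{Der}$ can jump at special parameter values (e.g. where the square root $\sqrt{1-4\lambda}$ degenerates or where $\lambda\in\{0,1\}$ is excluded).

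Next I would construct explicit parametrized bases realizing the needed degenerations, using the notation $A_i=\langle e_i,\dots,e_n\rangle$ and the recipe from the Method subsection: for each target algebra $\mathcal B$ I exhibit $E_i^t=\sum_j a_i^j(t)e_j$ (and, for the one-parameter targets absorbed into a two-parameter source, a parametrized index $f(t)$) so that the structure constants of the source converge to those of $\mathcal B$ as $t\to 0$. The computations are constrained by the necessary condition $\mathfrak{Der}(\mathcal A)<\mathfrak{Der}(\mathcal B)$, which I use first as a filter to decide which degenerations are even possible, and only then attempt to write down the matrices. Because every algebra in the list is a central extension built over the same nilpotent skeletons, I would organize the degenerations by which $3$-dimensional quotient they sit over, which should make the parametrized bases reasonably systematic.

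The main obstacle will be the negative part: proving $\mathbf L^4_{12}(*)\not\to$, $\mathbf L^4_{21}(*)\not\to$, $\mathbf L^4_{23}(*)\not\to$ fail in the directions that would collapse three components into fewer. Here the derivation-dimension test is insufficient, and I would invoke Lemma~\ref{gmain}: for each pair of families I must produce a Borel-stable Zariski-closed subset $\mathcal R\subset\mathbb{L}$ that contains every member of the source family but no structure representing a generic member of the target, typically cut out by polynomial conditions on the structure constants that are invariant under upper-triangular base change (such as constraints expressing that certain products land in prescribed subspaces $A_pA_q\subset A_r$). Exhibiting the right $\mathcal R$ for each separation — and verifying both its $\mathfrak{B}$-stability and the inclusion/exclusion of the relevant orbits — is the delicate step; the rest of the argument is bookkeeping once the orbit dimensions and the positive degenerations are in hand.
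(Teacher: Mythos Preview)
Your plan matches the paper's approach almost exactly: compute derivation dimensions for the three families (the paper finds $\dim\mathfrak{Der}\,\mathbf L^4_{12}(\lambda)=\dim\mathfrak{Der}\,\mathbf L^4_{21}(\lambda)=2$ and $\dim\mathfrak{Der}\,\mathbf L^4_{23}(\lambda,\alpha)=3$, giving orbit-closure dimension $15$ in each case), import the Novikov irreducible components $\mathcal N^4_{20}(\alpha)$ and $\mathcal N^4_{22}(\lambda)$ from \cite{kkk18}, and then exhibit explicit parametrized bases (with parametrized indices where needed) showing that every algebra in Theorem~A and both Novikov components degenerate from one of the three families.

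The one point where you are more careful than the paper is the separation step. The paper's written proof presents only the table of positive degenerations and the derivation counts; it does not display any Borel-stable invariant sets $\mathcal R$ or other non-degeneration arguments to separate the three closures. Your plan to invoke Lemma~\ref{gmain} for that purpose is sound and would fill this in, but be aware that in the paper's presentation the separation is left implicit (presumably relying on the fact that all three closures are $15$-dimensional and that the algebraic classification already distinguishes generic members, together with easy structural invariants of the underlying $3$-dimensional quotients $\mathbf L^{3*}_{04}(\lambda)$, $\mathbf L^{3*}_{05}$, $\mathbf L^{3*}_{06}(\lambda)$). So your outline is correct and, if anything, slightly more thorough than what the paper records.
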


\begin{Proof}
Recall that the description of all irreducible components  of $4$-dimensional nilpotent Novikov algebras was given in \cite{kkk18}.
Using the cited result, we can see that the variety of $4$-dimensional Novikov algebras has two irreducible components given by the following
families of algebras:

\begin{longtable}{lllllllll}
${\mathcal N}^4_{20}(\alpha)$  & $:$ & $e_1e_2 = e_3$ & $e_1e_1 = \alpha e_4$ & $e_1e_3 = e_4$ & $e_2e_2 = e_4$ & $e_2e_3 = e_4$ & $e_3e_2 = -e_4$ \\
\hline
${\mathcal N}^4_{22}(\lambda)$ & $:$ & $e_1e_1 = e_2$ & $e_1e_2 = e_3$ & $e_1e_3 = (2-\lambda)e_4$ & $e_2e_1 = \lambda e_4$ & $e_2e_2 = \lambda e_4$ & $e_3e_1 = \lambda e_4$
\end{longtable}

Now we can prove that the variety of $4$-dimensional nilpotent left-symmetric algebras has three irreducible components.
One can easily compute that
\begin{longtable}{lllll} $\mathfrak{Der} \ {\bf L}^{4}_{12}(\lambda)=2,$ 
& $\mathfrak{Der} \ {\bf L}^{4}_{21}(\lambda)=2,$ &  $\mathfrak{Der} \ {\bf L}^{4}_{23}(\lambda, \alpha)=3.$ 
\end{longtable}

The list of all necessary degenerations is given below:

{\tiny
\begin{longtable}{|lcl|llll|}

%\multicolumn{7}{c} {\large { \mbox{ {\bf Table A.} {\it Degenerations of left-symmetric algebras of dimension $4$.}}}} \\
%\multicolumn{7}{c}{}\\

\hline
${\bf L}^{4}_{12}(t)$&$\to$&${\mathcal N}^{4}_{20}(\alpha)$  & 
\multicolumn{2}{l}{$E_1^t=\frac{1}{\alpha t-\alpha t^2}e_1+\frac{1}{\alpha (t-1)}e_2+\frac{1+\alpha-2 t-\alpha t}{\alpha^2 (t-1) t^2}e_3$} & 
\multicolumn{2}{l|}{$E_2^t=\frac{1}{\alpha (t-1)}e_2+\frac{1}{\alpha^2 t-\alpha^2 t^2}e_3$} \\
&&& \multicolumn{2}{l}{$E_3^t=\frac{1}{\alpha^2 (t-1) t}e_3-\frac{1+\alpha}{\alpha^3 (t-1) t^2}e_4$}& \multicolumn{2}{l|}{$ E_4^t=\frac{1}{\alpha^3 (t-1)^2 t}e_4$} \\\hline
\multicolumn{7}{|l|}{${\bf L}^{4}_{23}\left(t+\lambda ,-\frac{2+t}{t^2+t \lambda }\right)  \to {\mathcal N}^4_{22}(\lambda)$}\\  
\multicolumn{3}{|l}{}& 
$E_1^t=t e_1$ & $E_2^t=t^{2}e_2$ &$E_3^t=t^3e_3$& $ E_4^t=-\frac{2 t^3}{2 t+\lambda }e_4$ \\\hline
${\bf L}^{4}_{21}(-t)$&$\to$&${\bf L}^{4}_{01}$  & 
\multicolumn{2}{l}{$E_1^t=e_1+t^2e_2+(1-2 t^2+t^3)e_3$} & \multicolumn{2}{l|}{$E_2^t=t^2e_2+t^2e_3+t e_4$} \\
&&& \multicolumn{2}{l}{$E_3^t=t e_3+(1-t^2+t^3)e_4$}& \multicolumn{2}{l|}{$ E_4^t=t^3 e_4$} \\\hline
${\bf L}^{4}_{01}$&$\to$&${\bf L}^{4}_{02}$  & 
$E_1^t=e_1$ & $E_2^t=e_2$ &$E_3^t=t^{-1}e_3$& $ E_4^t=t^{-1}e_4$ \\\hline
${\bf L}^{4}_{01}$&$\to$&${\bf L}^{4}_{03}$  & 
$E_1^t=t^{-1}e_1$ & $E_2^t=t^{-2}e_2$ &$E_3^t=t^{-1}e_3$& $ E_4^t=t^{-3}e_4$ \\\hline
${\bf L}^{4}_{01}$&$\to$&${\bf L}^{4}_{04}$  & 
$E_1^t=t^{-1} e_1$ & $E_2^t=t^{-2}e_2$ &$E_3^t=t^{-2}e_3$& $ E_4^t=t^{-4}e_4$ \\\hline
${\bf L}^{4}_{06}$&$\to$&${\bf L}^{4}_{05}$  & 
$E_1^t=t^{-1}e_1$ & $E_2^t=t^{-1}e_2$ &$E_3^t=t^{-2}e_3$& $ E_4^t=t^{-3}e_4$ \\\hline
${\bf L}^{4}_{12}(t^{-2})$&$\to$&${\bf L}^{4}_{06}$  & 
$E_1^t=t^{3}e_1$ & $E_2^t=t^{2}e_2$ &$E_3^t=t^{4}e_3$& $ E_4^t=t^{5}e_4$ \\\hline
${\bf L}^{4}_{08}$&$\to$&${\bf L}^{4}_{07}$  & 
$E_1^t=t^{-1}e_1$ & $E_2^t=t^{-1}e_2$ &$E_3^t=t^{-2}e_3$& $ E_4^t=t^{-3}e_4$ \\\hline

${\bf L}^{4}_{16}(\frac{1-i t}{t^2})$&$\to$&${\bf L}^{4}_{08}$  & 
\multicolumn{3}{l}{$E_1^t=\frac{2 t^3 \left(2 i+2 t-i t^2\right)}{(i+t)^2 (2 i+t)^3}e_1+\frac{i t^3 \left(t^2+2 i t-2\right)}{(i+t)^2 (2 i+t)^3}e_2-\frac{i t^4 \left(t^2+2 i t-2\right)^2}{(i+t)^4 (2 i+t)^4}e_3$} & \multicolumn{1}{l|}{$E_2^t=-\frac{t^2 \left(t^2+2 i t-2\right)}{\left(t^2+3 i t-2\right)^2}e_2$} \\
&&& \multicolumn{3}{l}{$E_3^t=\frac{t^4 \left(t^2+2 i t-2\right)^2}{(i+t)^4 (2 i+t)^4}e_3+\frac{t^4 \left(t^2+2 i t-2\right)^2}{(i+t)^4 (2 i+t)^4}e_4$}& \multicolumn{1}{l|}{$ E_4^t=-\frac{2 t^4 \left(t^2+2 i t-2\right)^3}{(i+t)^5 (2 i+t)^6}e_4$} \\\hline
${\bf L}^{4}_{10}$&$\to$&${\bf L}^{4}_{09}$  & 
$E_1^t=t^{-1}e_1$ & $E_2^t=e_2$ &$E_3^t=t^{-1}e_3$& $ E_4^t=t^{-2}e_4$ \\\hline
${\bf L}^{4}_{23}(-1,\frac{1+t}{1-t})$&$\to$&${\bf L}^{4}_{10}$  & 
$E_1^t=t e_1$ & $E_2^t=te_2$ &$E_3^t=t^2e_3$& $ E_4^t=\frac{2 t^3}{-1+t}e_4$ \\\hline
${\bf L}^{4}_{12}(\lambda)$&$\to$&${\bf L}^{4}_{11}(\lambda)$  & 
$E_1^t=t^{-1}e_1$ & $E_2^t=t^{-1}e_2$ &$E_3^t=t^{-2}e_3$& $ E_4^t=t^{-3}e_4$ \\\hline
${\bf L}^{4}_{14}(\lambda)$&$\to$&${\bf L}^{4}_{13}(\lambda)$  & 
$E_1^t=t^{-1}e_1$ & $E_2^t=t^{-1}e_2$ &$E_3^t=t^{-2}e_3$& $ E_4^t=t^{-3}e_4$ \\\hline

${\bf L}^{4}_{12}(t+\lambda)$ & $\to$ & \multicolumn{1}{l}{${\bf L}^{4}_{14}(\lambda)$}&
\multicolumn{4}{l|}{$\gamma=\sqrt{1-4\lambda}$}\\
\multicolumn{7}{|l|}{
$E_1^t=-\frac{t \left(1+\gamma\right)^2}{2 \lambda ^2}e_1+
\frac{t \left(1+\gamma\right)}{\lambda }e_2+\frac{ t \left(t^2(t+\lambda )\left(1+\gamma\right)^4+4\lambda^2\left(1+\gamma\right)^2\left(t(1-3 \lambda )-\lambda^2 \right)+32t^2 \lambda^3\right)}{4\lambda ^3 (t+\lambda )^2 \left(t \left(1+\gamma\right)-\left(1-\gamma\right) \lambda \right)}e_3$} \\
\multicolumn{7}{|l|}{
 $E_2^t=-\frac{t \left(1-\gamma\right)}{\lambda ^2}e_1+
\frac{2 t \left((1-\gamma)(\lambda^2-t)+t\lambda(3-\gamma)\right)}{\lambda^3(1-\gamma)}e_2+\frac{t\Big[t^2\left(2t(1-\gamma )^4-\lambda(1-6 \gamma ^2+8 \gamma ^3-3 \gamma ^4-32 \lambda ^2)\right)+ (1-\gamma )^2\lambda ^2\left( t(5+2 \gamma +\gamma ^2-12 \lambda)-4\lambda ^2 \right)\Big]}{2\lambda ^3(1-\gamma) (t+\lambda )^2 \left(t \left(1-\gamma\right)-\left(1+\gamma\right) \lambda \right)}e_3$}\\
\multicolumn{5}{|l}{$E_3^t=\frac{t^3 \left(1+\gamma\right)^4 }{4\lambda ^5}e_3-\frac{t^3 \left(1+\gamma\right)^3 \left(t(t+\lambda)\left(1+\gamma\right)^2 +4 \lambda^2 (1-\lambda -t)\right)}{8 \lambda ^6 (t+\lambda )^2}e_4$} &
\multicolumn{2}{l|}{$E_4^t=\frac{t^4 \left(1+\gamma\right)^5 }{8 \lambda ^7}e_4$} \\
 \hline

${\bf L}^{4}_{16}(\lambda)$&$\to$&${\bf L}^{4}_{15}(\lambda)$  & 
$E_1^t=t^{-1}e_1$ & $E_2^t=t^{-1}e_2$ &$E_3^t=t^{-2}e_3$& $ E_4^t=t^{-3}e_4$ \\\hline

${\bf L}^{4}_{12}(t+\lambda)$ & $\to$ & \multicolumn{1}{l} {${\bf L}^{4}_{16}(\lambda)$} &
 \multicolumn{4}{l|}{ 
$\gamma=\sqrt{1-4\lambda}$} \\
\multicolumn{7}{|l|}{$E_1^t=-\frac{t \left(1-\gamma\right)^2}{2\lambda ^2}e_1+\frac{t(1- \gamma)}{\lambda }e_2+\frac{ t \left(t^2(t+\lambda )\left(1-\gamma\right)^4+4 \lambda ^2 \left( 1-\gamma\right)^2\left(t(1-3 \lambda )-\lambda ^2\right)+32t^2\lambda ^3\right)}{4\lambda ^3 (t+\lambda )^2\left( t (1-\gamma)-(1+\gamma) \lambda \right)}e_3$} \\
\multicolumn{7}{|l|}{$E_2^t=-\frac{t \left(1-\gamma\right)}{\lambda ^2}e_1+
\frac{2 t \left((1-\gamma)(\lambda^2-t)+t\lambda(3-\gamma)\right)}{\lambda^3(1-\gamma)}e_2+\frac{t\Big[t^2\left(2t(1-\gamma )^4-\lambda(1-6 \gamma ^2+8 \gamma ^3-3 \gamma ^4-32 \lambda ^2)\right)+ (1-\gamma )^2\lambda ^2\left( t(5+2 \gamma +\gamma ^2-12 \lambda)-4\lambda ^2 \right)\Big]}{2\lambda ^3(1-\gamma) (t+\lambda )^2 \left(t \left(1-\gamma\right)-\left(1+\gamma\right) \lambda \right)}e_3$} \\
\multicolumn{5}{|l}{
$E_3^t=
\frac{t^3 \left(1-\gamma\right)^4}{4\lambda ^5}
e_3-\frac{t^3 \left(1-\gamma\right)^3 \left(t(t+\lambda)(1-\gamma)^2 +4\lambda^2(1-\lambda-t )\right)}{8 \lambda ^6 (t+\lambda )^2}e_4$} &
\multicolumn{2}{l|}{$ E_4^t=\frac{t^4 \left(1-\gamma\right)^5 }{8 \lambda ^7}e_4$}
\\ 

\hline

\multicolumn{7}{|l|}{${\bf L}^{4}_{23}\left(\frac{1}{t^3},\frac{t^3-t^6 (\alpha-1)}{1+t^3 (\alpha-1)}\right) \to {\bf L}^{4}_{17}(\alpha)$}\\  
\multicolumn{3}{|l}{}& 
$E_1^t=t e_1$ & $E_2^t=t^{2}e_2$ &$E_3^t=e_3$& $ E_4^t=\frac{2 t^4}{1+t^3 (-1+\alpha )}e_4$ \\\hline
${\bf L}^{4}_{21}(\frac{2}{1+t})$&$\to$&${\bf L}^{4}_{18}$  & 
\multicolumn{2}{l}{$E_1^t=-\frac{t(2+t)}{1+t}e_1-\frac{t(2+t)}{(1+t)^2}e_2$} & \multicolumn{2}{l|}{$E_3^t=-\frac{t^3(2+t)^3}{(1+t)^3}e_3-\frac{2t^3(2+t)^3}{(1+t)^5}e_4$} \\
&&& \multicolumn{3}{l}{$E_2^t=\frac{t^2(2+t)^2}{(1+t)^2}e_2-\frac{t^2(2+t)^2}{(1+t)^3}e_3-\frac{t^2(2+t)^2}{(1+t)^3}e_4$}& \multicolumn{1}{l|}{$ E_4^t=-\frac{t^4(2+t)^4}{(1+t)^5}e_4$} \\\hline
${\bf L}^{4}_{20}$&$\to$&${\bf L}^{4}_{19}$  & 
$E_1^t=t^{-1}e_1$ & $E_2^t=t^{-2}e_2$ &$E_3^t=t^{-3}e_3$& $ E_4^t=t^{-5}e_4$ \\\hline
${\bf L}^{4}_{21}(t^{-1})$&$\to$&${\bf L}^{4}_{20}$  & 
$E_1^t=t^{-1}e_1$ & $E_2^t=t^{-2}e_2$ &$E_3^t=t^{-3}e_3$& $ E_4^t=t^{-5}e_4$ \\\hline

${\bf L}^{4}_{23}(t,\frac{2 \alpha }{2-t})$&$\to$&${\bf L}^{4}_{22}(\alpha)$  & 
\multicolumn{2}{l}{$E_1^t=e_1-\frac{2-t}{t \left(6-t+2 t \alpha -t^2 (1+2 \alpha )\right)}e_2$} & \multicolumn{2}{l|}{$E_3^t=e_3-\frac{4 t \alpha +4 (1+\alpha )-t^2 (1+2 \alpha )}{t \left(6-t+2 t \alpha -t^2 (1+2 \alpha )\right)}e_4$} \\
&&& \multicolumn{3}{l}{$E_2^t=e_2-\frac{(2-t) (1+t)}{t \left(6-t+2 t \alpha -t^2 (1+2 \alpha )\right)}e_3-\frac{(2-t) (2+t (2 \alpha-1 ))}{t^2 \left(6-t+2 t \alpha -t^2 (1+2 \alpha )\right)^2}e_4$}& \multicolumn{1}{l|}{$ E_4^t=e_4$} \\\hline

\multicolumn{7}{|l|}{${\bf L}^{4}_{23}(t+\lambda,\frac{3+\lambda }{(\lambda-1 ) \lambda }) \to {\bf L}^{4}_{24}(\lambda)$} \\ 
 \multicolumn{7}{|l|} {$E_1^t =e_1+\frac{2 t^3 \left(\lambda ^2-9\right)+2 t^2 \left(9-21 \lambda +3 \lambda ^2+\lambda ^3\right)+t \left(3+13 \lambda -16 \lambda ^2+4 \lambda ^3\right)+2 (\lambda-3 )}{2 t (\lambda-3 ) \left(t^2 (3+\lambda )-t \left(3-9 \lambda-2 \lambda ^2\right)-\lambda  \left(3-6 \lambda-\lambda ^2\right)\right)}e_2$} \\

 \multicolumn{7}{|l|} {$E_2^t=e_2+\frac{2 t^3 \left(\lambda ^2-9\right)+t^2 \left(21-29 \lambda -10 \lambda ^2+6 \lambda ^3\right)-t \left(3-18 \lambda +3 \lambda ^2+12 \lambda ^3-4 \lambda ^4\right)-2 \left(3+2 \lambda -\lambda ^2\right)}{2 t (\lambda-3 ) \left(t^2 (3+\lambda )-t \left(3-9 \lambda -2 \lambda ^2\right)-\lambda  \left(3-6 \lambda -\lambda ^2\right)\right)}e_3$}\\
 \multicolumn{7}{|l|}{$ \quad +
\left[\frac{8 (\lambda-3 )^2 \lambda  (1+\lambda )-2 t^7 (3+\lambda )^3 \left(21-13 \lambda +2 \lambda ^2\right)+2 t^6 (3+\lambda )^2 \left(153-387 \lambda+115 \lambda ^2+23 \lambda ^3-8 \lambda ^4\right)+4 t \left(27-27 \lambda -93 \lambda ^2+51 \lambda ^3+66 \lambda ^4-48 \lambda ^5+8 \lambda ^6\right)}{4 t^2 \lambda(\lambda-1 )(\lambda-3 )^2 \left(t^2 (3+\lambda )-t \left(3-9 \lambda-2 \lambda ^2\right)-\lambda  \left(3-6 \lambda-\lambda ^2\right)\right)^2}\right.$} \\
\multicolumn{7}{|l|}{$ \quad- \frac{2 t^5 \left(1134-4806 \lambda +3789 \lambda ^2+2280 \lambda ^3-894 \lambda ^4-284 \lambda ^5+51 \lambda ^6+10 \lambda ^7\right)+t^3 \left(27+999 \lambda -3591 \lambda ^2+2941 \lambda ^3+368 \lambda ^4-1372 \lambda ^5+432 \lambda ^6+20 \lambda ^7-16 \lambda ^8\right)}{4 t^2 \lambda(\lambda-1 )(\lambda-3 )^2 \left(t^2 (3+\lambda )-t \left(3-9 \lambda-2 \lambda ^2\right)-\lambda  \left(3-6 \lambda-\lambda ^2\right)\right)^2}$} \\
\multicolumn{7}{|l|}{$\quad \left.+
\frac{2 t^4 \left(324-2700 \lambda +5085 \lambda ^2-1647 \lambda ^3-2218 \lambda ^4+802 \lambda ^5+141 \lambda ^6-39 \lambda ^7-4 \lambda ^8\right)+2 t^2 \left(54+117 \lambda -201 \lambda ^2+63 \lambda ^3+231 \lambda ^4+28 \lambda ^5-228 \lambda ^6+112 \lambda ^7-16 \lambda ^8\right)}{4 t^2 \lambda(\lambda-1 )(\lambda-3 )^2 \left(t^2 (3+\lambda )-t \left(3-9 \lambda-2 \lambda ^2\right)-\lambda  \left(3-6 \lambda-\lambda ^2\right)\right)^2}\right]e_4$} \\
\multicolumn{6}{|l}{$E_3^t=e_3-\frac{t^3 (3+\lambda )^2 (4 \lambda-13 )+t^2 \left(144-189 \lambda -94 \lambda ^2+19 \lambda ^3+8 \lambda ^4\right)-2 t \left(9-69 \lambda+11 \lambda ^2+37 \lambda ^3-12 \lambda ^4\right)-12 \left(3+2 \lambda -\lambda ^2\right)}{2 t \lambda  (t+\lambda ) \left(3-4 \lambda +\lambda ^2\right) \left(3-6 \lambda-\lambda ^2-t (3+\lambda )\right)}e_4$} &
\multicolumn{1}{l|}{$E_4^t=\frac{2 (\lambda-3 )+t (3+\lambda )}{2 \lambda  \left(3-4 \lambda +\lambda ^2\right)}e_4$} \\
 \hline

\end{longtable}}

 \end{Proof}

\end{document}